\newtheorem{theorem}{Theorem}
\newtheorem{corollary}[theorem]{Corollary}
\newtheorem{lemma}[theorem]{Lemma}
\newtheorem{proposition}[theorem]{Proposition}
\theoremstyle{definition}
\theoremstyle{remark}
 \def\R{{\mathbb{R}}}
 \def\Z{{\mathbb{Z}}}
\def\mod{{\rm Mod}}
 \def\Sp{{\rm Sp}}
\begin{document}

\newenvironment{prooff}{\medskip \par \noindent {\it Proof}\ }{\hfill
$\square$ \medskip \par}
    \def\sqr#1#2{{\vcenter{\hrule height.#2pt
        \hbox{\vrule width.#2pt height#1pt \kern#1pt
            \vrule width.#2pt}\hrule height.#2pt}}}
    \def\square{\mathchoice\sqr67\sqr67\sqr{2.1}6\sqr{1.5}6}
\def\pf#1{\medskip \par \noindent {\it #1.}\ }
\def\endpf{\hfill $\square$ \medskip \par}
\def\demo#1{\medskip \par \noindent {\it #1.}\ }
\def\enddemo{\medskip \par}
\def\qed{~\hfill$\square$}

\def \MCG {\mod(\Sigma_g)}
\def \MCGb {\mod(\Sigma_g^m)}

 \title[Mapping class group is generated by two commutators]
{The mapping class group is generated \linebreak by two commutators}

\author[R. \.{I}. Baykur]{R. \.{I}nan\c{c} Baykur}
\address{Department of Mathematics and Statistics, University of Massachusetts, Amherst, MA 01003-9305, USA}
\email{baykur@math.umass.edu}

\author[M.~Korkmaz]{Mustafa Korkmaz}
 \address{Department of Mathematics, Middle East Technical University, 06800
  Ankara, Turkey
  }
 \email{korkmaz@metu.edu.tr}


\begin{abstract}
We show that the mapping class group of any closed connected orientable surface of genus at least five is generated by only two commutators, and if the genus is three or four, by three commutators. 
\end{abstract} 
 \maketitle

\vspace{0.2in}
\section{Introduction}
The mapping class group $\mod(\Sigma_g)$ of a closed connected orientable surface of genus $g$ is known to be perfect, i.e. equal to its commutator subgroup,  when $g \geq 3$ \cite{Powell}. We prove the following peculiar result:

\begin{theorem} \label{thm:1}
The mapping class group $\mod(\Sigma_g)$ is generated by two commutators if $g\geq 5$, and by three commutators if $g \geq 3$. 
\end{theorem}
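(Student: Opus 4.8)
\medskip\noindent\emph{Proof idea.}\ The plan is to find a generating set of $\mod(\Sigma_g)$ consisting of two elements, each of which is visibly a single commutator; for $g=3,4$ the same scheme yields a generating set of three commutators. We use as a black box that $\mod(\Sigma_g)$ is generated by two elements (Wajnryb; in torsion--element form, Korkmaz), and in fact by a finite--order ``rotation'' $\rho$ of a symmetric model of $\Sigma_g$ together with one more element $w$ built from Dehn twists about curves related to the $\rho$--invariant chain, with the $\rho$--conjugates of $w$ running through a standard Humphries--type generating set. Since $\mod(\Sigma_g)$ is perfect for $g\ge 3$ (Powell), ``being a commutator'' carries no abelian obstruction, and the problem becomes: choose $\rho$ and $w$ --- which may be modified by composing with powers of $\rho$ and with $\rho$--conjugates of $w$, provided generation survives --- so that both are commutators.

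For $\rho$: if $\rho$ is conjugate to its inverse by a reflection $\sigma$ of the model, $\sigma\rho\sigma^{-1}=\rho^{-1}$, and $\rho$ has odd order $n$, then $\mu=\rho^{(n+1)/2}$ satisfies $\mu^2=\rho$ and $\sigma\mu\sigma^{-1}=\mu^{-1}$, whence $\rho=\mu^2=[\mu,\sigma]$ is a commutator; one can take $\rho$ of order $2g+1$, which is odd for every $g$, so $\rho$ poses no genus restriction. For $w$: the elementary identity $[\phi,t_{b_1}\cdots t_{b_k}]=t_{\phi(b_1)}\cdots t_{\phi(b_k)}\,t_{b_k}^{-1}\cdots t_{b_1}^{-1}$, valid for any mapping class $\phi$ and curves $b_i$, together with the change--of--coordinates principle (any two non--separating curves with homeomorphic complements are interchanged by some $\phi$), shows that a product of Dehn twists and inverse twists in this ``telescoped'' shape is a single commutator. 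The idea is then to choose $w$ to be exactly such a product --- and to use lantern and chain relations to verify that its $\rho$--conjugates generate $\mod(\Sigma_g)$, e.g.\ by producing the Humphries twists. A single bare Dehn twist is not of this form (and is presumably not a single commutator), so $w$ must genuinely be a several--twist product; fitting a product that both generates with $\rho$ and telescopes into a commutator requires a subsurface of genus at least five, and for $g=3,4$ one instead takes $w$ to be a product of two commutators, giving three in all.

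The main obstacle is precisely this last point: ``$\langle\rho,w\rangle=\mod(\Sigma_g)$'' and ``$w$ is a single commutator of the controlled form $[\phi,t_{b_1}\cdots t_{b_k}]$'' pull against each other, so the configuration of curves $b_i$ and the homeomorphism $\phi$ must be pinned down explicitly, after which the surjectivity of $\langle\rho,w\rangle\to\mod(\Sigma_g)$ has to be checked by hand (tracking the chain curves under $\rho$ and $w$, via the change--of--coordinates principle or a known presentation). Determining for which $g$ a suitable $(\rho,w)$ exists --- which lantern configurations and rotation orders are available on $\Sigma_g$ --- is exactly what produces the dichotomy: two commutators for $g\ge 5$, three for $g=3,4$.
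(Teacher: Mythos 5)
Your outline does capture the paper's two key mechanisms: a finite-order element written as a commutator via a dihedral symmetry (if $\sigma\rho\sigma^{-1}=\rho^{-1}$ and $\rho$ has odd order $n$, then $\rho=[\rho^{(n+1)/2},\sigma]$), and a second generator which is a ``telescoped'' twist product, exactly as in the paper's $A_1A_2C_2^{-1}B_4^{-1}=[A_1A_2,\phi]$ with $\phi(a_1,a_2)=(b_4,c_2)$. But as a proof it has genuine gaps. The main one is that the whole content of the theorem is the generation statement for an explicit pair, and you defer it (``has to be checked by hand''). The two-generator theorems of Wajnryb and Korkmaz cannot be used as a black box, since their generators are not commutators; the paper must prove new generation results from scratch: a lantern-relation lemma showing that the difference elements $A_1B_1^{-1}, A_2B_2^{-1}, B_1C_1^{-1}, B_iB_{i+1}^{-1}, C_jC_{j+1}^{-1}$ generate $\mod(\Sigma_g)$, and then explicit curve-chasing computations showing that $R$ (for odd $g\geq5$), resp. $T$ (for even $g\geq6$), together with the single element $A_1A_2C_2^{-1}B_4^{-1}$ produces all of these, with a separate three-element statement covering $g=3,4$. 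Nothing in your sketch substitutes for these verifications, and your proposed explanation of the $g\geq5$ versus $g=3,4$ dichotomy (``requires a subsurface of genus at least five'') is an assertion, not an argument.

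Second, your specific choice of torsion element fails. An element $\rho$ of order $2g+1$ is in general not conjugate to its inverse inside $\mod(\Sigma_g)$: it cannot be realized as a rigid rotation of an embedded model about an axis (Riemann--Hurwitz rules this out), and when $2g+1$ is prime the quotient of any order-$(2g+1)$ action is a sphere with three cone points whose rotation data can never be invariant under negation, so no orientation-preserving $\sigma$ with $\sigma\rho\sigma^{-1}=\rho^{-1}$ exists; if instead your ``reflection'' is orientation-reversing, then $[\mu,\sigma]$ is a commutator only in the extended mapping class group and does not prove the statement. The paper avoids this by using the order-$g$ rotation $R$ when $g$ is odd (where your trick does apply verbatim) and, when $g$ is even, the alternating rotation $T=Sh_1$, whose expression as a single commutator is not the bare dihedral trick: it combines $S=[S^{k+1},\sigma_1]$ with $h=[\sigma,P^{-(2k+1)}]$ (the latter resting on the chain relation for $P=A_1B_1C_1B_2\cdots C_{k-1}B_k$) through the identity $[x,y][y,z]=[xz^{-1},zyz^{-1}]$. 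So both halves of your plan --- the torsion commutator in even genus and the generation check in all genera --- still require the substantive work that constitutes the actual proof.
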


Our result is clearly sharp when $g \geq 5$, for $\MCG$ is not cyclic. When $g=1$ and $2$, the abelianization of $\mod(\Sigma_g)$ is $\Z_{12}$ and $\Z_{10}$, respectively, so the mapping class group cannot be generated by commutators in these low genera cases.

A geometric implication of Theorem~\ref{thm:1} is that any pair of genus \mbox{$g \geq 5$} surface bundle over the circle are cobordant through a finite sequence of basic building blocks, which are fibrations over two-holed tori, prescribed by the two commutator generators and their inverses.

Since the action of mapping classes on the integral first homology group 
$H_1(\Sigma_g)$ of $\Sigma_g$ induces an epimorphism from $\mod(\Sigma_g)$ 
onto the symplectic group  $\Sp (2g,\Z)$, another immediate implication is the following:

\begin{corollary} 
The symplectic group $\Sp(2g,\Z)$ is generated by two commutators if $g\geq 5$, and by three commutators if $g\geq 3$.
\end{corollary}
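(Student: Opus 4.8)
The plan is to transport the generating set of $\mod(\Sigma_g)$ furnished by Theorem~\ref{thm:1} across the standard homological representation. Recall that the action of $\mod(\Sigma_g)$ on $H_1(\Sigma_g) \cong \Z^{2g}$ preserves the algebraic intersection pairing, hence factors through a homomorphism $\rho \colon \mod(\Sigma_g) \to \Sp(2g,\Z)$, and that $\rho$ is surjective; this is classical and can be seen, for instance, by checking that the images of a suitable finite collection of Dehn twists already generate $\Sp(2g,\Z)$. This surjectivity is precisely what was invoked in the statement of the corollary.

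First I would fix, for $g \geq 5$, mapping classes $a_1,b_1,a_2,b_2 \in \mod(\Sigma_g)$ with the property that the two commutators $[a_1,b_1]$ and $[a_2,b_2]$ generate $\mod(\Sigma_g)$, as provided by Theorem~\ref{thm:1}; for $3 \le g \le 4$ I would instead fix three such commutators. Since any group homomorphism carries $[x,y]$ to $[\rho(x),\rho(y)]$, the elements $[\rho(a_1),\rho(b_1)]$ and $[\rho(a_2),\rho(b_2)]$ are genuine commutators in $\Sp(2g,\Z)$. Because $\rho$ is onto and $\{[a_1,b_1],[a_2,b_2]\}$ generates the domain, their $\rho$-images generate $\rho(\mod(\Sigma_g)) = \Sp(2g,\Z)$. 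The three-commutator case for $g = 3,4$ is identical word for word.

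There is essentially no obstacle here: the entire content of the corollary is already contained in Theorem~\ref{thm:1}, and the argument uses only the two elementary facts that a group epimorphism sends generating sets to generating sets and sends commutators to commutators. The one point worth recording separately is that, exactly as for the mapping class group, the bound of two commutators is sharp when $g \geq 5$ since $\Sp(2g,\Z)$ is not cyclic, and in genus $1$ and $2$ the corresponding statement fails because the abelianizations $\Sp(2,\Z) = \SL(2,\Z)$ and $\Sp(4,\Z)$ are $\Z_{12}$ and $\Z_2$, so those groups are not even perfect.
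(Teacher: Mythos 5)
Your argument is correct and is exactly the one the paper intends: the corollary is stated as an immediate consequence of Theorem~\ref{thm:1} via the surjection $\mod(\Sigma_g)\to \Sp(2g,\Z)$ induced by the action on $H_1(\Sigma_g)$, using only that an epimorphism carries commutators to commutators and generating sets to generating sets. No gaps; your additional remarks on sharpness and low genus are accurate but not needed for the statement.
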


Theorem~\ref{thm:1} adds to the ever-growing literature on minimal generating sets for $\mod(\Sigma_g)$; e.g. by $2g+1$ Dehn twists \cite{Dehn, Lickorish, Humphries}, by three involutions \cite{BrendleFarb, KorkmazInvolutions}, or by two general elements \cite{Wajnryb1996, Korkmaz}. 

It is interesting to know for which perfect groups the minimal number of generators is equal to the minimal number of commutator generators. There are numerous other groups satisfying this property. For example; any finite non-abelian simple group, such as the alternating group $A_n$ for $n \geq 5$, is a perfect group generated by two elements, whereas by the resolution of Ore's conjecture \cite{LiebeckEtAl}, any element in such a group is a commutator. The same holds for the special linear group $SL(n, R)$ for various $n \geq 3$ and coefficient rings $R$, which goes back to the classical works of Thompson \cite{Thompson}. However, the situation is much more subtle  for the mapping class group, since $\MCG$, for $g \geq 3$,  is not even uniformly perfect \cite{EndoKotschick}, i.e. there is no fixed positive number that any element in $\mod(\Sigma_g)$ can be expressed as a product of that many commutators, let it be one.

The explicit set of generators we provide for Theorem~\ref{thm:1} consists of a finite order mapping class and an infinite order one (or two) that is a product of disjoint Dehn twists. In Section~\ref{sec:prelim},  we review the basic results regarding Dehn twists. The torsion elements, and their expressions as commutators, come from the symmetries of the surface, and are discussed in Section~\ref{sec:RandT}. Various new generating sets for $\MCG$ featuring the above mapping classes are obtained in  Section~\ref{sec:gen}, and the proof of Theorem~\ref{thm:1} is given in Section~\ref{sec:final}.

\vspace{0.2in}
\noindent \textit{Acknowledgements.} The first author was partially supported by the NSF Grant DMS-$1510395$.  The second author thanks  UMass Amherst for its generous support and wonderful research environment during this project.


\section{Preliminaries}\label{sec:prelim}

The \emph{mapping class group} ${\rm Mod}(\Sigma)$ of a compact connected oriented surface $\Sigma$
is the group of orientation--preserving diffeomorphisms of $\Sigma\to \Sigma$, which restrict to identity near the boundary $\partial \Sigma$, modulo isotopies of the same type. We will be primarily interested in the 
case when $\Sigma= \Sigma_g$, the closed surface of genus $g$.

We denote simple closed curves on $\Sigma$ by lowercase letters such as $a,b,c,d$, and denote \emph{positive (right-handed) Dehn twists} $t_a, t_b, t_c, t_d$ about them by the corresponding capital letters $A,B,C, D$, all with indices. In our notation, both the curves on $\Sigma$ and self-diffeomorphisms of $\Sigma$ should be understood up to isotopy. We use the functional notation for the composition of diffeomorphisms (i.e. for $\phi \psi$, $\psi$ acts on $\Sigma$ first), yet we still express the \emph{commutator of $\phi$ and $\psi$} as $[\phi, \psi]= \phi \psi \phi^{-1} \psi^{-1}$.

We will make repeated use of the following basic relations in ${\rm Mod}(\Sigma)$, without referring to them explicitly: for two simple closed curves 
$a$ and $b$ on $\Sigma$, and for any $f \in {\rm Mod}(\Sigma)$,
\begin{itemize}
\item (\emph{Conjugation}) \ \ \  \ $ft_af^{-1}=t_{f(a)}$,
\item (\emph{Commutativity}) $AB=BA$, if $a$ and $b$ are disjoint,
\item (\emph{Braid relation}) \ \ $ABA= BAB$, if $a$ and $b$ intersect transversely  at one point.
\end{itemize}
\vspace{-0.2cm}
We will also need 
\begin{itemize}
\item (\emph{Lantern relation}) for $x_i, y_j$ the simple closed curves on the four-holed sphere in Figure~\ref{fig:lantern} (embedded in $\Sigma$),  $X_1 X_2 X_3=Y_1 Y_2 Y_3 Y_4.$  
\end{itemize}
\begin{figure}[h!]
 \centering
     \includegraphics[width=3cm]{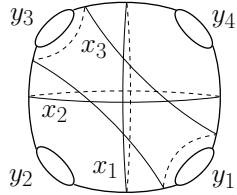}
     \caption{The curves of the lantern relation} \label{fig:lantern}
\end{figure}

All the relations above appeared in the pioneering work of Dehn \cite{Dehn}, who then proved that $\MCG$ is generated by finitely many Dehn twists. Later works of Lickorish~\cite{Lickorish} and Humphries~\cite{Humphries} led to the following minimal collection of Dehn twist generators $\{A_i, B_j, C_k\}$ along the curves $\{a_i, b_j, c_k\}$ on $\Sigma_g$  in Figure ~\ref{fig:models} below.
 
\begin{theorem} {\rm\bf(Dehn-Lickorish-Humphries)}\label{thm:DLK}
The mapping class group $\mod(\Sigma_g)$ is generated by 
$\{ A_1,A_2, B_1,B_2,\ldots ,B_g,C_1,C_2,\ldots ,C_{g-1}\}.$
\end{theorem}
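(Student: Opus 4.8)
\textit{Proof proposal.} I would prove the theorem in two stages: first the classical fact, due to Dehn and Lickorish~\cite{Dehn, Lickorish}, that $\mod(\Sigma_g)$ is already generated by the larger list of $3g-1$ Dehn twists $\{A_1,\dots,A_g,B_1,\dots,B_g,C_1,\dots,C_{g-1}\}$ about all of the curves $a_i,b_j,c_k$ of Figure~\ref{fig:models}; then, following Humphries~\cite{Humphries}, the removal of $A_3,\dots,A_g$ from this list by means of the lantern relation.

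For the first stage I would induct on the genus, stating the assertion for compact surfaces with boundary as well so that the surfaces obtained by cutting along a curve remain within the inductive scheme; the base case is $\mod(\Sigma_1)\cong\SL(2,\Z)=\langle A_1,B_1\rangle$ with $a_1,b_1$ meeting once. For the inductive step, let $N\le\mod(\Sigma_g)$ be the subgroup generated by the listed $3g-1$ twists. The crux is to show that $N$ acts transitively on isotopy classes of nonseparating simple closed curves. Granting this, choosing for each nonseparating $\gamma$ some $f\in N$ with $f(a_1)=\gamma$ gives $t_\gamma=fA_1f^{-1}\in N$, so $N$ contains every twist about a nonseparating curve. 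Then for arbitrary $\phi\in\mod(\Sigma_g)$, transitivity lets us assume---after composing with an element of $N$, and if necessary with a power of $A_gB_g$ to restore the orientation---that $\phi$ fixes $b_g$ together with its orientation, whereupon the exact sequence obtained by cutting along $b_g$,
\[
1\to\langle B_g\rangle\to\operatorname{Stab}(b_g)\to\mod(\Sigma_g\setminus b_g)\to 1,
\]
with $\Sigma_g\setminus b_g\cong\Sigma_{g-1,2}$ of strictly smaller genus, combined with the inductive hypothesis (so that the third term, and hence the lifts of its elements, are products of twists about nonseparating curves lying in $N$) and with $B_g\in N$, forces $\phi\in N$. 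Thus $N=\mod(\Sigma_g)$.

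To prove the transitivity of $N$ I would combine the change-of-coordinates principle---any two nonseparating curves are related by some mapping class, and any two meeting transversely in a single point are interchanged by a product of two Dehn twists---with the connectivity of the graph whose vertices are nonseparating curves and whose edges record disjointness or a single transverse intersection, choosing the curves along a connecting path so that every elementary move is effected either by one of the listed twists or by a twist which, by the inductive hypothesis applied to a proper subsurface that is a regular neighborhood of standard curves, already lies in $N$. This bookkeeping---keeping the entire motion inside $N$ rather than merely inside $\mod(\Sigma_g)$, and correctly handling the capping and Birman exact sequences relating closed surfaces to bounded ones---is the step I expect to be the main obstacle.

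For the second stage, for each $i$ with $3\le i\le g$ I would exhibit an embedded four-holed sphere in $\Sigma_g$ whose seven curves (in the notation of Figure~\ref{fig:lantern}) consist of $a_i$ together with six curves drawn from $\{a_1,a_2,b_1,\dots,b_g,c_1,\dots,c_{g-1}\}\cup\{a_3,\dots,a_{i-1}\}$; since the four boundary twists commute, the lantern relation $X_1X_2X_3=Y_1Y_2Y_3Y_4$ can be solved for $A_i$ as a product of twists about those six curves. An induction on $i$ from $3$ to $g$ then puts every $A_i$ in $\langle A_1,A_2,B_1,\dots,B_g,C_1,\dots,C_{g-1}\rangle$, which with the first stage completes the proof. (For $g=1$ the asserted generating set is just $\{A_1,B_1\}$, and for $g=2$ there is no $a_i$ with $i\ge 3$, so nothing needs to be removed.) Once a suitable four-holed sphere has been located---by inspection of Figure~\ref{fig:models}---the lantern manipulation itself is entirely routine, so the real work lies in the transitivity argument above.
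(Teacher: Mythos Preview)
The paper does not supply its own proof of Theorem~\ref{thm:DLK}: it is quoted as the classical Dehn--Lickorish--Humphries theorem, with citations to~\cite{Dehn,Lickorish,Humphries}, and is used throughout the paper as a black box. So there is nothing to compare your proposal against in the paper itself.

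That said, your second stage contains a genuine gap. You claim that for each $i\ge 3$ one can find an embedded four-holed sphere whose seven lantern curves all lie in the set $\{a_1,a_2,b_1,\dots,b_g,c_1,\dots,c_{g-1}\}\cup\{a_3,\dots,a_{i-1}\}$. No such configuration exists: among the standard curves $a_i,b_j,c_k$, any two are either disjoint or meet once, so no three of them can be the pairwise-intersecting interior curves $x_1,x_2,x_3$ of a lantern. Indeed, the lantern the paper actually uses (Figure~\ref{fig:lanternembed}) has boundary curves $a_1,c_1,c_2,a_3$ and interior curves $a_2,d_1,d_2$, where $d_1,d_2$ are \emph{not} standard curves. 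Solving that relation for $A_3$ leaves you with $D_1$ and $D_2$, and showing these lie in the Humphries subgroup requires exactly the kind of conjugation-by-elements-already-known-to-be-in-the-subgroup argument that the paper carries out in Lemma~\ref{lem:G=Mod}---which in turn presupposes rather than proves Theorem~\ref{thm:DLK}.

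The standard route (and Humphries' own) is to bypass the two-stage structure entirely: prove directly that the subgroup $H$ generated by the $2g+1$ Humphries twists acts transitively on isotopy classes of nonseparating simple closed curves. Since every Lickorish generator is a twist about such a curve and $a_1$ is one of the Humphries curves, conjugation then puts all Lickorish generators into $H$. Your first-stage transitivity argument is essentially the right idea, but it should be run for the Humphries subgroup rather than the Lickorish one, making the lantern step unnecessary.
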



\bigskip
\section{Finite order mapping classes as commutators}\label{sec:RandT}


\begin{figure}[h!]
\begin{tikzpicture}[scale=0.8]
\begin{scope} [xshift=-4cm, yshift=0cm, scale=1.2]
 \draw[very thick] (0,0) circle [radius=2.5cm];
 \draw[very thick, rotate=-25] (0,1.6) circle [radius=0.2cm]; 
 \draw[very thick, rotate=-75] (0,1.6) circle [radius=0.2cm];  
 \draw[very thick, rotate=-125] (0,1.6) circle [radius=0.2cm]; 
 \draw[very thick, rotate=25] (0,1.6) circle [radius=0.2cm]; 
 \draw[very thick, rotate=75] (0,1.6) circle [radius=0.2cm];  
 \draw[very thick, rotate=-175] (0,1.6) circle [radius=0.2cm];  
 \draw[very thick, fill, rotate=143] (0,1.6) circle [radius=0.03cm];  
 \draw[very thick, fill, rotate=130] (0,1.6) circle [radius=0.03cm];  
 \draw[very thick, fill, rotate=117] (0,1.6) circle [radius=0.03cm];  
 \draw[thick, green, rotate=0, rounded corners=4pt]  (-0.48,1.4)--(-0.2, 1.36)--(0.2,1.36)--(0.48,1.4);   
 \draw[thick, green, dashed,  rotate=0, rounded corners=4pt] (-0.48,1.46)--(-0.2, 1.5)--(0.2,1.5)--(0.48,1.46);  
 \draw[thick, green, rotate=-200, rounded corners=4pt]  (-0.48,1.4)--(-0.2, 1.36)--(0.2,1.36);   
 \draw[thick, green, dashed,  rotate=-200, rounded corners=4pt] 
(-0.48,1.46)--(-0.2, 1.5)--(0.2,1.5);  
 \draw[thick, green, rotate=100, rounded corners=4pt] (-0.2, 1.36)--(0.2,1.36)--(0.48,1.4);   
 \draw[thick, green, dashed,  rotate=100, rounded corners=4pt] (-0.2, 1.5)--(0.2,1.5)--(0.48,1.46);  
\draw[thick, green, rotate=50, rounded corners=4pt]  (-0.48,1.4)--(-0.2, 1.36)--(0.2,1.36)--(0.48,1.4);   
 \draw[thick, green, dashed,  rotate=50, rounded corners=4pt] 
 (-0.48,1.46)--(-0.2, 1.5)--(0.2,1.5)--(0.48,1.46);  
\draw[thick, green, rotate=-50, rounded corners=4pt]  (-0.48,1.4)--(-0.2, 1.36)--(0.2,1.36)--(0.48,1.4);   
 \draw[thick, green, dashed,  rotate=-50, rounded corners=4pt] 
 (-0.48,1.46)--(-0.2, 1.5)--(0.2,1.5)--(0.48,1.46);  
\draw[thick, green, rotate=-100, rounded corners=4pt]  (-0.48,1.4)--(-0.2, 1.36)--(0.2,1.36)--(0.48,1.4);   
 \draw[thick, green, dashed,  rotate=-100, rounded corners=4pt] 
 (-0.48,1.46)--(-0.2, 1.5)--(0.2,1.5)--(0.48,1.46);  
\draw[thick, green, rotate=-150, rounded corners=4pt]  (-0.48,1.4)--(-0.2, 1.36)--(0.2,1.36)--(0.48,1.4);   
 \draw[thick, green, dashed,  rotate=-150, rounded corners=4pt] 
 (-0.48,1.46)--(-0.2, 1.5)--(0.2,1.5)--(0.48,1.46);  

 \draw[thick, red, rotate=-25] (0,1.6) circle [radius=0.35cm]; 
 \draw[thick, red, rotate=-75] (0,1.6) circle [radius=0.35cm]; 
 \draw[thick, red, rotate=-125] (0,1.6) circle [radius=0.35cm]; 
 \draw[thick, red, rotate=-175] (0,1.6) circle [radius=0.35cm]; 
 \draw[thick, red, rotate=25] (0,1.6) circle [radius=0.35cm]; 
 \draw[thick, red, rotate=75] (0,1.6) circle [radius=0.35cm]; 
 \draw[thick,blue,  rotate=-25, rounded corners=4pt] (-0.02,1.8)--(-0.1, 2.15)--(-0.02,2.5);  
 \draw[thick,  blue, dashed, rotate=-25, rounded corners=4pt]  (0.02,1.8)--(0.1, 2.15)--(0.02,2.5); 
  \draw[thick, blue,  rotate=-75, rounded corners=4pt] (-0.02,1.8)--(-0.1, 2.15)--(-0.02,2.5);  
 \draw[thick, blue,  dashed, rotate=-75, rounded corners=4pt]  (0.02,1.8)--(0.1, 2.15)--(0.02,2.5); 
 \draw[thick, blue,  rotate=-125, rounded corners=4pt] (-0.02,1.8)--(-0.1, 2.15)--(-0.02,2.5);  
 \draw[thick, blue,  dashed, rotate=-125, rounded corners=4pt]  (0.02,1.8)--(0.1, 2.15)--(0.02,2.5); 
 \draw[thick, blue,  rotate=-175, rounded corners=4pt] (-0.02,1.8)--(-0.1, 2.15)--(-0.02,2.5);  
 \draw[thick, blue,  dashed, rotate=-175, rounded corners=4pt]  (0.02,1.8)--(0.1, 2.15)--(0.02,2.5); 
\draw[thick, blue, rotate=25, rounded corners=4pt] (-0.02,1.8)--(-0.1, 2.15)--(-0.02,2.5);  
 \draw[thick, blue,  dashed, rotate=25, rounded corners=4pt]  (0.02,1.8)--(0.1, 2.15)--(0.02,2.5); 
 \draw[thick, blue,  rotate=75, rounded corners=4pt] (-0.02,1.8)--(-0.1, 2.15)--(-0.02,2.5);  
 \draw[thick,  blue, dashed, rotate=75, rounded corners=4pt]  (0.02,1.8)--(0.1, 2.15)--(0.02,2.5);

 \node[scale=1, blue] at (1.5,2.3) {$a_1$};
 \node[scale=1, blue] at (2.8,0.7) {$a_{2}$};
 \node[scale=1, blue] at ( 2.45,-1.5) {$a_{3}$};
 \node[scale=1, blue] at (0.3,-2.8) {$a_4$};
 \node[scale=1, blue] at (-2.89,0.7) {$a_{g-1}$};
  \node[scale=1, blue] at (-1.3,2.5) {$a_g$};
  
 \node[scale=1, red] at (1.3,1.5) {$b_1$};
\node[scale=1, red] at (2.1,0) {$b_2$};
\node[scale=1, red] at (1.9,-0.7) {$b_3$};
\node[scale=1, red] at (0.7,-1.9) {$b_4$};
\node[scale=1, red] at (-0.75, 0.1) {$b_{g-1}$};
 \node[scale=1, red] at (-1.3,1.5) {$b_g$};

\node[scale=1, green] at (0.75,0.7) {$c_{1}$};
 \node[scale=1, green] at (0.99,-0.2) {$c_{2}$};
  \node[scale=1, green] at (-0.6,0.7) {$c_{g-1}$};
 \node[scale=1, green] at (0,1.1) {$c_g$};
 \node[scale=1, green] at (0.5,-1) {$c_3$};
  \draw[very thick, violet, ->] (0,2.6)--(0,3.3);
 \node[scale=1, violet] at (0.25,3.1) {$z$};
 
\draw[very thick, violet, ->] (2.6,0)--(3.2,0);
 \node[scale=1, violet] at (3,-0.3) {$y$};
 
  \draw[very thick, violet ,rotate=-25] (0,2.6)--(0,3.5);
  \draw[very thick, violet ,rotate=145] (0,2.6)--(0,3.5);
 \node[scale=1, violet] at (1.2,3.05) {$\ell$};

\end{scope}

\begin{scope} [xshift=4cm,scale=0.55]
\filldraw[yellow!20!white] (-2.6,5.41)--(2.61,5.41)--(5.85,1.4)--(4.7,-3.74)-- (0,-6) -- (-4.7, -3.74)--(-5.85,1.4)-- cycle;
\filldraw[red!30!white] (-0.87,1.8)--(0.87,1.8)--(1.96,0.44)-- (1.6,-1.25)--(0,-2) -- (-1.6, -1.25)--(-1.96,0.44)--  cycle;
\draw[very thick] (-4.7,-3.74)-- (0,-6) -- (4.7, -3.74)--(5.85,1.4)  --(2.6,5.41)--(-2.6,5.41)--(-5.85,1.4);
\draw[very thick] (-1.6,-1.25)--(0,-2) -- (1.6, -1.25)--(1.96,0.44)--(0.87,1.8)--(-0.87,1.8)--(-1.96,0.44);
\draw[very thick, dotted](-1.6,-1.25)--(-1.96,0.44);
\draw[very thick, dotted] (-4.7,-3.74)--(-5.85,1.4);
\draw[very thick] (-0.87,1.8) -- (-2.6, 5.41);
\draw[very thick] (0.87,1.8 )--( 2.6*0.66,0.66*5.41);
\draw[very thick]  (1.96,0.44)--(5.85,1.4 );
\draw[very thick]  ( 1.6,-1.25)--(4.7,-3.74 );
\draw[very thick]  ( 0,-2)--(0,-6);
\draw[very thick]  ( -1.6,-1.25)--(-4.7,-3.74 );
\draw[very thick]  (-1.96,0.44)--(-5.85,1.4 );
 \draw[very thick, fill, rotate=100] (-0.5,3.56) circle [radius=0.05cm];  
\draw[very thick, fill, rotate=100] (0,3.6) circle [radius=0.05cm];  
\draw[very thick, fill, rotate=100] (0.5,3.56) circle [radius=0.05cm];  
\draw[very thick, violet, rotate=-25.715] (0,4.05)--(0,7.4);
\draw[very thick, violet, rotate=155.715] (0,3.7)--(0,7.5);
\node[violet, scale=0.9] at (2.5,6.5) {$\ell$};
\draw[thick, red]  (-2, 5)--( 2,5)--(0.7,2.2 )--(-0.7,2.2) --cycle;
\draw[thick, red, rotate=51.43]  (-2, 5)--( 2,5)--(0.7,2.2 )--(-0.7,2.2) --cycle;
\draw[thick, red, rotate=-51.43]  (-2, 5)--( 2,5)--(0.7,2.2 )--(-0.7,2.2) --cycle;
\draw[thick, dotted, red, rotate=102.86]  (-2, 5)--( 2,5)--(0.7,2.2 )--(-0.7,2.2) --cycle;
\draw[thick, red, rotate=-102.86]  (-2, 5)--( 2,5)--(0.7,2.2 )--(-0.7,2.2) --cycle;
\draw[thick, red, rotate=154.29]  (-2, 5)--( 2,5)--(0.7,2.2 )--(-0.7,2.2) --cycle;
\draw[thick, red, rotate=-154.29]  (-2, 5)--( 2,5)--(0.7,2.2 )--(-0.7,2.2) --cycle;
\draw[thick, green, rounded corners=4pt, rotate=0]  ( -0.35,-4)..controls (-0.5,-4.3) and (0.5,-4.3).. (0.35,-4);
\draw[thick, green, dashed, rounded corners=4pt, rotate=0]  ( -0.35,-4)..controls (-0.2,-3.8) and (0.2,-3.8).. (0.35,-4);
\draw[thick, green, rounded corners=4pt, rotate=51.43]  ( -0.35,-4)..controls (-0.5,-4.3) and (0.5,-4.3).. (0.35,-4);
\draw[thick, green, dashed, rounded corners=4pt, rotate=51.43]  ( -0.35,-4)..controls (-0.2,-3.8) and (0.2,-3.8).. (0.35,-4);
\draw[thick, green, rounded corners=4pt, rotate=102.86]  ( -0.35,-4)..controls (-0.5,-4.3) and (0.5,-4.3).. (0.35,-4);
\draw[thick, green, dashed, rounded corners=4pt, rotate=102.86]  ( -0.35,-4)..controls (-0.2,-3.8) and (0.2,-3.8).. (0.35,-4);
%
\draw[thick, blue, rounded corners=4pt, rotate=154.29]  ( -0.35,-4)..controls (-0.5,-4.3) and (0.5,-4.3).. (0.35,-4);
\draw[thick, blue, dashed, rounded corners=4pt, rotate=154.29]  ( -0.35,-4)..controls (-0.2,-3.8) and (0.2,-3.8).. (0.35,-4);
%
\draw[thick, green, rounded corners=4pt, rotate=-51.43]  ( -0.35,-4)..controls (-0.5,-4.3) and (0.5,-4.3).. (0.35,-4);
\draw[thick, green, dashed, rounded corners=4pt, rotate=-51.43]  ( -0.35,-4)..controls (-0.2,-3.8) and (0.2,-3.8).. (0.35,-4);
\draw[thick, green, rounded corners=4pt, rotate=-102.86]  ( -0.35,-4)..controls (-0.5,-4.3) and (0.5,-4.3).. (0.35,-4);
\draw[thick, green, dashed, rounded corners=4pt, rotate=-102.86]  ( -0.35,-4)..controls (-0.2,-3.8) and (0.2,-3.8).. (0.35,-4);
%
\draw[thick, blue, rounded corners=4pt, rotate=-154.29]  ( -0.35,-4)..controls (-0.5,-4.3) and (0.5,-4.3).. (0.35,-4);
\draw[thick, blue, dashed, rounded corners=4pt, rotate=-154.29]  ( -0.35,-4)..controls (-0.2,-3.8) and (0.2,-3.8).. (0.35,-4);
\node[blue, scale=0.9] at (0.85,3.9) {$a_1$};
\node[blue, scale=0.9] at (-0.85 ,3.9) {$a_g$};

\node[blue, scale=0.9] at (0.5,0.7) {$a_2$};
\node[green, scale=0.9] at (-3.6,1.7) {$c_{g-1}$};
\node[green, scale=0.9] at (-3.7,-1.8) {$c_4$};
\node[green, scale=0.9] at (0.9,-4) {$c_3$};
\node[green, scale=0.9] at (3.5, -1.9) {$c_2$};
\node[green, scale=0.9] at (3.8,1.7) {$c_{1}$};

\node[red, scale=0.9] at (-3.2,2.9) {$b_{g}$};
\node[red, scale=0.9] at (4.4,-0.9) {$b_2$};
\node[red, scale=0.9] at (2.2,-3.9) {$b_3$};
\node[red, scale=0.9] at (-2.6,-3.7) {$b_4$};
\node[red, scale=0.9] at (3.6,2.9) {$b_1$};
\node[red, scale=0.9] at (0,2.8) {$b_{g+1}$};
%
\draw[blue, rounded corners=7pt] (2.15,-0.5)..controls(2,-0.4) and (-0.1,2.2)..(0, 2.2);
\draw[blue, dashed, rounded corners=7pt] (2.15,-0.5)..controls(2,0) and (0.1,2.2)
..(0, 2.2);
\draw[very thick, violet, ->] (0,3.7)--(0,6.9);
\node[violet, scale=0.9] at (0.45,6.5) {$z$};

\draw[very thick, violet, ->] (4.1,0)--(6.8,0);
\node[violet, scale=0.9] at (6.4, -0.5) {$y$};


\end{scope}

\begin{scope} [yshift=-6.5cm, xshift=-1cm, scale=0.6]
 \draw[very thick,rounded corners=14pt] (-7.8,-2) -- (10.4,-2)--(11.35,0)-- (10.4,2) -- (-7.8,2) --(-8.75, 0)-- cycle;
 \draw[very thick, xshift=-6.5cm] (0,0) circle [radius=0.6cm];
 \draw[very thick, xshift=-3.9cm] (0,0) circle [radius=0.6cm];
 \draw[very thick, xshift=-1.3cm] (0,0) circle [radius=0.6cm];
\draw[very thick, xshift=6.5cm] (0,0) circle [radius=0.6cm];
 \draw[very thick, xshift=9.1cm] (0,0) circle [radius=0.6cm];
 
\draw[very thick, fill, xshift=1.7cm] (0,0) circle [radius=0.05cm];
\draw[very thick, fill, xshift=2.6cm] (0,0) circle [radius=0.05cm];
\draw[very thick, fill, xshift=3.5cm] (0,0) circle [radius=0.05cm];

 \draw[thick, red, xshift=9.1cm] (0,0) circle [radius=0.9cm];
 \draw[thick, red, xshift=6.5cm] (0,0) circle [radius=0.9cm];
 \draw[thick, red, xshift=-1.3cm] (0,0) circle [radius=0.9cm];
 \draw[thick, red, xshift=-3.9cm] (0,0) circle [radius=0.9cm];
 \draw[thick, red, xshift=-6.5cm] (0,0) circle [radius=0.9cm];

\draw[very thick,rounded corners=14pt] (-7.8,-2) -- (10.4,-2)--(11.35,0)-- (10.4,2) -- (-7.8,2) --(-8.75, 0)-- cycle;
\draw[thick,red, rounded corners=14pt] (-7.6,-1.8) -- (10.2,-1.8)--(11.15,0)-- (10.2,1.8) -- (-7.6,1.8) --(-8.55, 0)-- cycle;

 \draw[thick, green, rounded corners=4pt, xshift=-5.2cm] (-0.7,-0.03) -- (-0.4,-0.23)--(0.4,-0.23) --(0.7,-0.03);
 \draw[thick, green, dashed, rounded corners=4pt, xshift=-5.2cm] (-0.7,0.03) -- (-0.4,0.23)--(0.4,0.23) --(0.7,0.03);
 \draw[thick, green, rounded corners=4pt, xshift=-2.6cm] (-0.7,-0.03) -- (-0.4,-0.23)--(0.4,-0.23) --(0.7,-0.03);
 \draw[thick, green, dashed, rounded corners=4pt, xshift=-2.6cm] (-0.7,0.03) -- (-0.4,0.23)--(0.4,0.23) --(0.7,0.03);
 \draw[thick, green, rounded corners=4pt, xshift=0cm] (-0.7,-0.03) -- (-0.4,-0.23)--(0.4,-0.23);
 \draw[thick, green, dashed, rounded corners=4pt, xshift=0cm] (-0.7,0.03) -- (-0.4,0.23)--(0.4,0.23);
 \draw[thick, green, rounded corners=4pt, xshift=5.2cm] (-0.4,-0.23)--(0.4,-0.23) --(0.7,-0.03);
 \draw[thick, green, dashed, rounded corners=4pt, xshift=5.2cm]  (-0.4,0.23)--(0.4,0.23) --(0.7,0.03);
 \draw[thick, green, rounded corners=4pt, xshift=7.8cm] (-0.7,-0.03) -- (-0.4,-0.23)--(0.4,-0.23) --(0.7,-0.03);
 \draw[thick, green, dashed, rounded corners=4pt, xshift=7.8cm] (-0.7,0.03) -- (-0.4,0.23)--(0.4,0.23) --(0.7,0.03);

\draw[thick, blue, rounded corners=4pt, xshift=-7.8cm] (-0.7,-0.03) -- (-0.4,-0.23)--(0.4,-0.23) --(0.7,-0.03);
 \draw[thick, blue, dashed, rounded corners=4pt, xshift=-7.8cm] (-0.7,0.03) -- (-0.4,0.23)--(0.4,0.23) --(0.7,0.03);
\draw[thick, blue, rounded corners=4pt] (-3.93,0.6) -- (-4.13, 1)--(-4.13,1.6) --(-3.93,2);
 \draw[thick, blue, dashed, rounded corners=4pt] (-3.87,0.6) -- (-3.67, 1)--(-3.67,1.6) --(-3.87,2);
\draw[thick, blue, rounded corners=4pt, xshift=2.63cm] (-3.93,0.6) -- (-4.13, 1)--(-4.13,1.6) --(-3.93,2);
 \draw[thick, blue, dashed, rounded corners=4pt, xshift=2.57cm] (-3.87,0.6) -- (-3.67, 1)--(-3.67,1.6) --(-3.87,2);
\draw[thick, blue, rounded corners=4pt, xshift=10.4cm] (-0.7,-0.03) -- (-0.4,-0.23)--(0.4,-0.23) --(0.7,-0.03);
 \draw[thick, blue, dashed, rounded corners=4pt, xshift=10.4cm] (-0.7,0.03) -- (-0.4,0.23)--(0.4,0.23) --(0.7,0.03);

\node[blue, scale=0.9] at (-7.6 ,-0.7) {$a_1$};
\node[blue, scale=0.9] at (-4.7,1.3) {$a_2$};
\node[blue, scale=0.9] at (-2.1,1.3) {$a_3$};
\node[green, scale=0.9] at (-5.2,-0.7) {$c_1$};
\node[green, scale=0.9] at (-2.6,-0.7) {$c_2$};
\node[green, scale=0.9] at (0,-0.7) {$c_3$};
\node[green, scale=0.9] at (7.8,-1.1) {$c_{g-1}$};
\node[blue, scale=0.9] at (10.3,-0.7) {$a_g$};

\node[red, scale=0.9] at (-6.5,1.35) {$b_1$};
\node[red, scale=0.9] at (-3,1) {$b_2$};
\node[red, scale=0.9] at (-0.4,1) {$b_3$};
\node[red, scale=0.9] at (8.2,1.1) {$b_g$};
\node[red, scale=0.9] at (5.4,1.1) {$b_{g-1}$};
\node[red, scale=0.9] at (4,-1.35) {$b_{g+1}$};

\draw[very thick, violet, ->] (1.3,2.1)--(1.3,3.5);
 \node[scale=1, violet] at (1.8,3.2) {$z$};
\draw[very thick, violet, ->] (11.45,0)--(12.6,0);
 \node[scale=1, violet] at (12.3,-0.5) {$y$};
\end{scope}
\node[scale=1] at (-3.7,-4) {(i)};
\node[scale=1] at (4,-4) {(ii)};
\node[scale=1] at (0,-8.3) {(iii)};

\end{tikzpicture}
\caption{The three models. The $x$--axis is perpendicular to the page in all.}
\label{fig:models}
\end{figure}
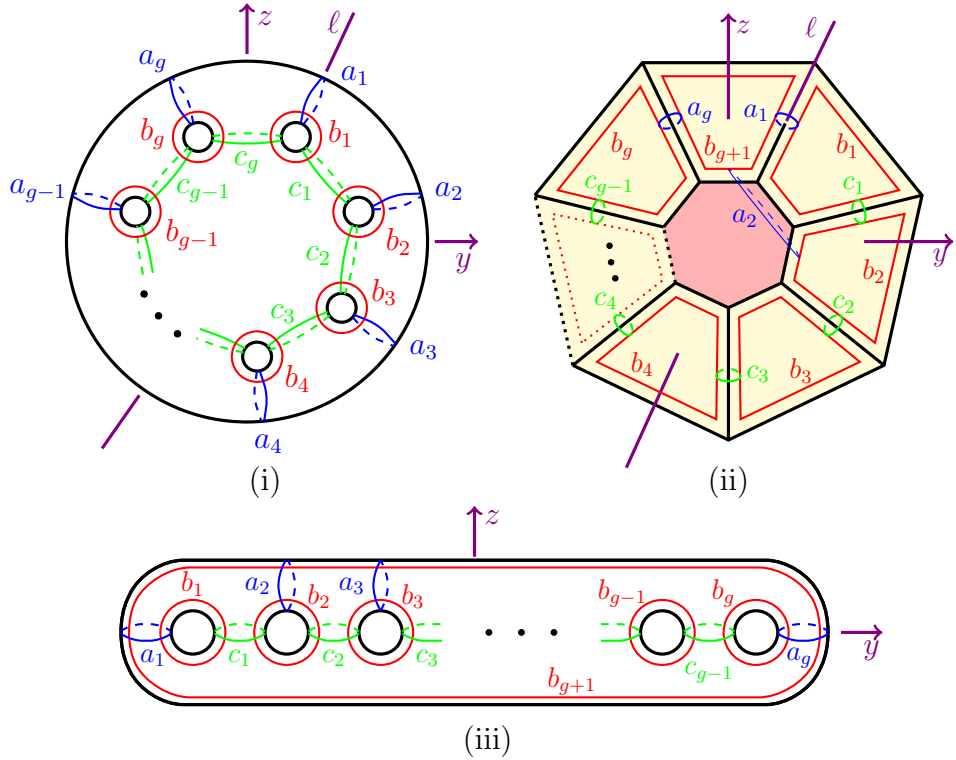

Consider the three different embeddings of the closed surface $\Sigma_g$ in $\R^3$ as depicted in Figure~\ref{fig:models}. The surface in (ii) is the boundary of the solid handlebody which consists of two thickened $(g+1)$--gons, stacked on top of each other, and $(g+1)$ solid handles joining their corresponding vertices. There are orientation-preserving diffeomorphisms between the three models which identify the curves labeled as $a_i, b_j, c_k$ in each one. These models allow us to easily introduce and study certain torsion elements in $\MCG$ coming from the symmetries of the surface. The surface $\Sigma_g$ is invariant under the following maps:

\begin{itemize}
\item  the clockwise $\frac{2\pi}{g}$--rotation $R$ about the $x$--axis in Figure~\ref{fig:models}(i),
\item the clockwise $\frac{2\pi}{g+1}$--rotation $S$ about the $x$--axis in Figure~\ref{fig:models}(ii),
\item  the rotations $\rho_1$ and $\rho_2$ by $\pi$ about the $z$--axis and the line $\ell$, respectively,
 in Figure~\ref{fig:models}(i),  
\item the rotations $\sigma_1$ and $\sigma_2$ by $\pi$ about the $z$--axis and the line $\ell$, respectively,
 in Figure~\ref{fig:models}(ii), 
\item the rotations $\sigma$ and $h$ by $\pi$  about the $z$--axis and the $y$--axis, respectively,
in Figure~\ref{fig:models}(iii).
\end{itemize}

Clearly, in $\MCG$, the rotations $R$ and $S$ yield torsion elements of orders $g$ and $g+1$, respectively, and
 $\rho_i, \sigma_i, \sigma, h$  yield involutions (elements of order  $2$), where $h$ is a \emph{hyperelliptic involution}. It is easy to check (say by Alexander's method applied to the maximal chain $a_1, b_1, c_1, c_2, \ldots, c_{g-1}, b_g$) that  the involutions $\sigma$ and $h$ we described on the model~(iii) correspond on the model~(ii) to $\sigma_1$ and the involution $h_1$ which interchanges the top and bottom thickened $(g+1)$--gons by a rotation along a central circle through the mid-points of the solid handles. Under these identifications, we define one more  torsion element:
\begin{itemize}
\item $T$ is the ``alternating rotation'' of the surface in Figure~\ref{fig:models}(ii), prescribed as $T=S h_1$.  
\end{itemize}
Here $S$ and $h_1$ commute, so $T= h_1 S$ as well. Note that $T$ is of order $g+1$ if $g$ is odd, and  $2(g+1)$ if $g$ is even.

\begin{proposition} \label{torsioncomm}
In $\MCG$, the mapping classes $R$ when $g= 2k +1$, $k \geq 1$, and $S, h, T$ when $g= 2k$, $k \geq 1$, are all commutators, which can be expressed as
\begin{enumerate}
\item $R = [R^{k+1}, \rho_1]$,
\item $S = [S^{k+1} , \sigma_1]$,
\item \,$h = \,[\sigma, \, P^{-(2k+1)}]$,
\item $T =[ S^{k+1} P^{2k+1} \, , P^{-(2k+1)} \sigma_1 P^{2k+1}]$, 
\end{enumerate}
where $P=A_1 B_1 (C_1 B_2) \cdots (C_{k-1} B_{k}) $. 
\end{proposition}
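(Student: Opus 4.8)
The plan is to isolate the geometry into two ingredients — a pair of ``dihedral'' relations satisfied by the rotations, and one chain relation identifying $P^{2k+1}$ with a hyperelliptic half--twist — after which items (1), (2) and (4) become short manipulations in $\MCG$ and item (3) carries all the weight. For items (1) and (2): since $\rho_1$ is the $\pi$--rotation about the $z$--axis, which is orthogonal to the $x$--axis fixed by $R$ in model~(i), the two symmetries already satisfy $\rho_1 R\rho_1^{-1}=R^{-1}$ as diffeomorphisms of $\Sigma_g\subset\R^3$, hence in $\MCG$; likewise $\sigma_1 S\sigma_1^{-1}=S^{-1}$ in model~(ii). Using this together with $\rho_1^2=\sigma_1^2=1$ and the fact that $R$ has order $g=2k+1$ while $S$ has order $g+1=2k+1$, one computes
\[
[R^{k+1},\rho_1]=R^{k+1}\bigl(\rho_1 R^{-(k+1)}\rho_1^{-1}\bigr)=R^{k+1}R^{k+1}=R^{2k+2}=R ,
\]
and, verbatim, $[S^{k+1},\sigma_1]=S^{2k+2}=S$.

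For item (3) I would argue in three steps. First, reading off Figure~\ref{fig:models}, the curves occurring in $P=t_{a_1}t_{b_1}t_{c_1}t_{b_2}\cdots t_{c_{k-1}}t_{b_k}$ form, in this order, a chain of $2k=g$ simple closed curves $a_1,b_1,c_1,b_2,\ldots,c_{k-1},b_k$ (consecutive curves meeting once, the rest disjoint) — the first half of the maximal chain $a_1,b_1,c_1,b_2,\ldots,c_{g-1},b_g$. A regular neighborhood $\Sigma_1$ of this chain is a genus--$k$ surface with a single boundary curve $\delta$, and $\delta$ separates $\Sigma_g$ into $\Sigma_1$ and a complementary copy $\Sigma_2$ of genus $k$. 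By the chain relation, $P^{4k+2}=t_\delta$, so $J:=P^{2k+1}$ satisfies $J^2=t_\delta$ and is the standard ``hyperelliptic'' class supported in $\Sigma_1$, i.e.\ the one restricting to the hyperelliptic involution of the closed surface obtained by capping $\delta$ with a disk. Second, from the model I would check that $\sigma$ fixes $\delta$ and interchanges $\Sigma_1$ with $\Sigma_2$; then $\sigma J\sigma^{-1}$ is the analogous hyperelliptic class on $\Sigma_2=\sigma(\Sigma_1)$, and it commutes with $J$ since their supports are disjoint. Third, the global hyperelliptic involution factors as
\[
h=J\,(\sigma J\sigma^{-1})^{-1}=\sigma J^{-1}\sigma^{-1}J=[\sigma,J^{-1}] ;
\]
the inverse is forced here, because $\bigl(J(\sigma J\sigma^{-1})^{-1}\bigr)^2=J^2(\sigma J^2\sigma^{-1})^{-1}=t_\delta t_\delta^{-1}=1$ whereas $\bigl(J(\sigma J\sigma^{-1})\bigr)^2=t_\delta^{2}\neq1$, and moreover $[\sigma,J^{-1}]$ is an involution acting as $-\mathrm{Id}$ on $H_1(\Sigma_g;\Z)$ — the defining feature of a hyperelliptic involution. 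To pin down that this involution is precisely the one prescribed via model~(iii), and simultaneously to fix the orientation ambiguities implicit in ``hyperelliptic class'' above, I would run Alexander's method, checking that $[\sigma,J^{-1}]$ and $h$ send the maximal chain $a_1,b_1,c_1,\ldots,c_{g-1},b_g$ to the same curves up to isotopy — the verification indicated in the remark preceding the proposition.

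For item (4), granting (3) and the model identification $\sigma=\sigma_1$, the argument is purely formal. Writing $J=P^{2k+1}$ and using $\sigma^2=1$,
\[
\bigl[\,S^{k+1}J,\ J^{-1}\sigma J\,\bigr]=\bigl(S^{k+1}\sigma S^{-(k+1)}\bigr)\bigl(J^{-1}\sigma J\bigr) ;
\]
from the relation $\sigma S\sigma^{-1}=S^{-1}$ established above, equivalently $\sigma S^{-1}=S\sigma$, we get $\sigma S^{-(k+1)}=S^{k+1}\sigma$, hence $S^{k+1}\sigma S^{-(k+1)}=S^{2k+2}\sigma=S\sigma$ since $S$ has order $2k+1$. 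Therefore the commutator equals $S\cdot(\sigma J^{-1}\sigma J)=S\cdot[\sigma,J^{-1}]=S\,h=S\,h_1=T$, using item (3).

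The one genuine obstacle is the geometry buried in the second step of item (3): verifying that $\sigma$ swaps the two halves of $\Sigma_g$ across the separating curve $\delta$ bounding the chain neighborhood, and that the hyperelliptic involution picked out by model~(iii) is the one adapted to this chain. Rather than chasing the supporting homeomorphisms through the three models, I would settle both, once and for all, via Alexander's method on the long chain $a_1,b_1,c_1,\ldots,b_g$.
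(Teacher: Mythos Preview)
Your argument is correct and matches the paper's approach in all essentials: the dihedral relations $\rho_1 R\rho_1^{-1}=R^{-1}$ and $\sigma_1 S\sigma_1^{-1}=S^{-1}$ for (1)--(2), the chain relation identifying $P^{2k+1}$ with the half--twist on one side together with $\sigma$ swapping the two genus--$k$ halves for (3), and the same underlying algebra for (4). The only cosmetic difference is that the paper runs (4) forward---writing $T=Sh_1=[S^{k+1},\sigma_1]\,[\sigma_1,P^{-(2k+1)}]$ and then merging the two commutators via the identity $[x,y][y,z]=[xz^{-1},zyz^{-1}]$---whereas you expand the target commutator directly and simplify to $Sh$; the two computations are equivalent.
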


\begin{proof}
From the dihedral symmetries of the models (i) and (ii) in Figure~\ref{fig:models}, we easily deduce that  

\indent $R = \rho_2 \rho_1$ and $\rho_2 = R^{k+1} \rho_1 \, R^{-{(k+1)}}$, when $g= 2k+1 \geq 3$, and

\indent $S = \sigma_2 \sigma_1$ and $\sigma_2 = S^{k+1} \sigma_1 S^{-(k+1)}$, when $g= 2k \geq 2$. 

\noindent Therefore $R= \rho_2 \rho_1 = (R^{k+1} \rho_1 \, R^{-{(k+1)}}) \rho_1 = [R^{k+1}, \rho_1]$, as $\rho_1^{-1} = \rho_1$, and similarly $S =  [S^{k+1} , \sigma_1]$. This proves (1) and (2).

For (3), let $g=2k$ and let $\delta$ be the separating curve that is the intersection of $\Sigma_g$ with the $xz$--plane in Figure~\ref{fig:models}(iii), such that $\delta$ is the common boundary of two compact genus--$k$ subsurfaces $\Sigma$ and $\Sigma'$. The surfaces $\Sigma$ and $\Sigma'$ are tubular neighborhoods of the $(2k)$--chains $a_1, b_1, c_1, b_2, \ldots, c_{k-1}, b_k$ and $a_g, b_g, c_{g-1}, b_{g-1}, \ldots c_{k+1}, b_{k+1}$.

We first show that the hyperelliptic involution $h$ can be expressed as
\[ 
h =(A_1 B_1 C_1 B_2 \cdots C_{k-1} B_k )^{2k+1} (A_g B_g C_{g-1} B_{g-1}\cdots C_{k+1} B_{k+1})^{-(2k+1)}  \, . 
\]
Let $P=A_1 B_1 C_1 B_2 \cdots C_{k-1} B_k$ and $P'= A_g  B_g C_{g-1} B_{g-1} \cdots C_{k+1} B_{k+1}$. Note that the diffeomorphism $P^{2k+1}$ is a $\pi$--rotation of the subsurface $\Sigma$ along the $y$--axis, followed by isotoping the boundary back to its original position, so that its square $P^{4k+2}= t_\delta$. (The latter equality is known as the \emph{$(2k)$--chain relation}; see e.g. \cite{FarbMargalit}.) This can be easily checked by the Alexander's method:  $P^{2k+1}$ maps each one of the curves $a_1,b_1,c_1,\ldots,c_{k-1},b_k$ to itself, but with reversed orientation, whereas it maps the arc $\alpha$ to the arc $\beta$ in Figure~\ref{fig:hypellinv}. By the same token, $(P')^{2k+1}$ is a  similar $\pi$--rotation of the subsurface $\Sigma'$ along the $y$--axis, albeit in the opposite direction. So taking the inverse of one, as we did above, we get a \mbox{$\pi$--rotation} of the whole surface $\Sigma_g = \Sigma \cup_{\delta} \Sigma'$, which is the hyperelliptic involution $h$.

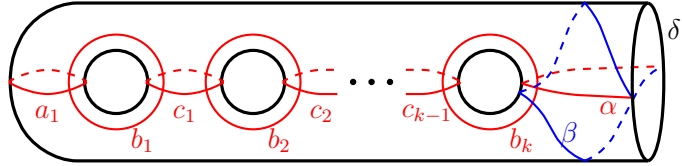
\begin{figure}[h!]	
\centering
\begin{tikzpicture}[every node/.style={inner sep=0pt}, scale=0.7]
 \draw[very thick,rounded corners=15pt] (3.6,1.5) -- (-8,1.5)--(-8.7,0)-- (-8,-1.5) -- (3.6,-1.5);
 \draw[very thick, xshift=-6.5cm] (0,0) circle [radius=0.6cm]; 
 \draw[very thick, xshift=-3.9cm] (0,0) circle [radius=0.6cm];
 \draw[very thick,fill, xshift=-2cm] (0,0) circle [radius=0.03cm];
 \draw[very thick,fill, xshift=-1.65cm] (0,0) circle [radius=0.03cm];
 \draw[very thick,fill, xshift=-1.3cm] (0,0) circle [radius=0.03cm];
 \draw[very thick, xshift=0.6cm] (0,0) circle [radius=0.6cm];
 \draw[thick, red, xshift=-6.5cm] (0,0) circle [radius=0.9cm];
\draw[thick, red, xshift=-3.9cm] (0,0) circle [radius=0.9cm];
\draw[thick, red, xshift=0.6cm] (0,0) circle [radius=0.9cm];
\draw[thick, red,  rounded corners=8pt, xshift=-7.8cm] (-0.7,-0.03) -- (-0.4,-0.23)--(0.4,-0.23) --(0.7,-0.03);
 \draw[thick, red,  dashed, rounded corners=8pt, xshift=-7.8cm] (-0.7,0.03) -- (-0.4,0.23)--(0.4,0.23) --(0.7,0.03);
 \draw[thick, red,  rounded corners=8pt, xshift=-5.2cm] (-0.7,-0.03) -- (-0.4,-0.23)--(0.4,-0.23) --(0.7,-0.03);
 \draw[thick, red,  dashed, rounded corners=8pt, xshift=-5.2cm] (-0.7,0.03) -- (-0.4,0.23)--(0.4,0.23) --(0.7,0.03);
 \draw[thick, red,  rounded corners=8pt, xshift=-2.6cm] (-0.7,-0.03) -- (-0.4,-0.23)--(0.3,-0.23) ;
 \draw[thick, red,  dashed, rounded corners=8pt, xshift=-2.6cm] (-0.7,0.03) -- (-0.4,0.23)--(0.3,0.23);
\draw[thick, red,  rounded corners=8pt, xshift=-0.7cm]  (-0.3,-0.23)--(0.4,-0.23) --(0.7,-0.03);
 \draw[thick, red,  dashed, rounded corners=8pt, xshift=-0.7cm]  (-0.3,0.23)--(0.4,0.23) --(0.7,0.03);
  \draw[thick, red,  rounded corners=8pt, xshift=-2.6cm] (3.8,-0.03) -- (4.5,-0.23)--(5.9,-0.3) ;
  \draw[thick,  red, dashed, rounded corners=8pt, xshift=-2.6cm] (3.8,-0.03) -- (4.5,0.23)--(6.5,0.3) ;
\draw[thick, blue, rounded corners=6pt, xshift=-2.6cm] (3.75,-0.2) -- (4.1,-0.43)--(4.6,-1.3) --(5,-1.5);
\draw[thick, blue, dashed, rounded corners=6pt, xshift=-2.6cm] (5,-1.5) -- (5.4,-1.3) --(6.2,0.2)--(6.5,0.3) ;
\draw[thick, blue, dashed, rounded corners=6pt, xshift=-2.6cm] (3.75,-0.2) -- (4.3,0) --(4.8,1.3)--(5,1.5) ;
\draw[thick, blue,  rounded corners=6pt, xshift=-2.6cm] (5,1.5) -- (5.2,1.3) --(5.7,0)--(5.9,-0.3) ;
\draw[very thick,  rounded corners=4pt] (3.6,1.5) ..controls (3.2,1.4) and (3.2,-1.4)..(3.6,-1.5);
\draw[very thick,  rounded corners=4pt] (3.6,1.5) ..controls (4,1.4) and (4,-1.4)..(3.6,-1.5);
\node[red, scale=0.9] at (-7.8 ,-0.6) {$a_1$};
\node[red, scale=0.9] at (-5.2 ,-0.6) {$c_1$};
\node[red, scale=0.9] at (-2.6 ,-0.6) {$c_2$};
\node[red, scale=0.9] at (-0.6 ,-0.6) {$c_{k-1}$};
\node[red, scale=0.9] at (-6 ,-1.1) {$b_{1}$};
\node[red, scale=0.9] at (-3.4 ,-1.1) {$b_2$};
\node[red, scale=0.9] at (1.2 ,-1.1) {$b_k$};
\node[red, scale=0.9] at (2.85 ,-0.5) {$\alpha$};
\node[blue, scale=0.9] at (2.1 ,-1) {$\beta$};
\node[scale=0.9]  at (4.1 ,1) {$\delta$};
\end{tikzpicture}
\caption{The genus--$k$ subsurface $\Sigma$ with boundary $\delta$.}
\label{fig:hypellinv}
\end{figure}

Next, we observe that the involution $\sigma$ on $\Sigma_g$ interchanges these two $2k$--chains. It follows that $P'= \sigma P \sigma^{-1}$, and therefore 
\begin{equation*}
h=(\sigma P \sigma^{-1})^{-(2k+1)} \, P^{2k+1}  = \sigma P^{-(2k+1)} \sigma^{-1} P^{2k+1} = [ \sigma , P^{-(2k+1)}  ]. 
\end{equation*}

Lastly, since $\sigma$ and $h$ correspond to $\sigma_1$ and $h_1$ in model (ii), we have
\begin{eqnarray*}
T & = & S h_1 \\
&=& [S^{k+1} , \sigma_1] [\sigma_1, \, P^{-(2k+1)}]  \\
&=& (S^{k+1}  \sigma_1 S^{-(k+1)} \sigma_1^{-1}) (\sigma_1  P^{-(2k+1)} \sigma_1^{-1} P^{2k+1}) \\
&=& S^{k+1}  \sigma_1 S^{-(k+1)}   P^{-(2k+1)} \sigma_1^{-1} P^{2k+1} \\
&=& S^{k+1} (P^{2k+1} P^{-(2k+1)}) \sigma_1 (P^{2k+1} P^{-(2k+1)}) S^{-(k+1)} P^{-(2k+1)} \sigma_1^{-1} P^{2k+1} \\
&=& (S^{k+1} P^{2k+1}) (P^{-(2k+1)} \sigma_1 P^{2k+1}) (S^{k+1} P^{2k+1} )^{-1} (P^{-(2k+1)} \sigma_1 P^{2k+1})^{-1} \\
&=& [ S^{k+1} P^{2k+1} \, , P^{-(2k+1)} \sigma_1 P^{2k+1}],
\end{eqnarray*} 
which\footnote{In general, $[x, y] [y, z] = [ x z^{-1}, zyz^{-1}]$ for \emph{any} group elements $x, y, z \in G$ \cite{Carmichael}.}
concludes (4).
\end{proof}


\bigskip
\section{New generating sets for the mapping class group}\label{sec:gen}

Here we obtain several new generating sets for $\MCG$, focusing on generators that can be expressed as commutators (for suitable $g$).

\begin{lemma} \label{lem:G=Mod} 
For $g\geq 3$, the mapping class group  
$\mod(\Sigma_g)$ is generated by $A_1B_1^{-1}, A_2B_2^{-1}, B_1C_1^{-1},$
$B_iB_{i+1}^{-1}$ and  $C_jC_{j+1}^{-1}$, where $1\leq i\leq g-1$ and $1\leq j\leq g-2$. 
\end{lemma}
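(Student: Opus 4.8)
The strategy is to show that the subgroup $G\leq\mod(\Sigma_g)$ generated by the listed $2g$ elements is all of $\mod(\Sigma_g)$, and the first move is a bookkeeping reduction. Regard the Humphries generators $A_1,A_2,B_1,\dots ,B_g,C_1,\dots ,C_{g-1}$ of Theorem~\ref{thm:DLK} as the vertices of a graph, with an edge joining $F$ to $F'$ precisely when $FF'^{-1}$ (or its inverse) is one of the listed generators: the $B_j$'s form a path $B_1\!-\!B_2\!-\!\cdots\!-\!B_g$, the curve $B_1$ is joined to $A_1$ and to $C_1$, the $C_k$'s form a path starting at $C_1$, and $A_2$ is joined to $B_2$. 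This graph is connected, so $G$ contains $FF'^{-1}$ for \emph{every} pair of Humphries generators. Hence, if $G$ were known to contain even one Humphries twist, say $B_g$, then it would contain all of them: $B_{g-1}=(B_{g-1}B_g^{-1})B_g$, and inductively every $B_j$; then $A_1=(A_1B_1^{-1})B_1$ and $A_2=(A_2B_2^{-1})B_2$; then $C_1=(B_1C_1^{-1})^{-1}B_1$, and inductively every $C_k$. By Theorem~\ref{thm:DLK} this would give $G=\mod(\Sigma_g)$. So everything reduces to placing a single Dehn twist inside $G$.

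That last step must use the lantern relation. Indeed, every generator of $G$ is a product of a Dehn twist and the inverse of a Dehn twist, hence dies in the abelianization of $\mod(\Sigma_g)$ (all twists are conjugate), so $G$ lies in the commutator subgroup of $\mod(\Sigma_g)$; for the lemma to hold this subgroup must be everything, i.e. $\mod(\Sigma_g)$ must be perfect, which is exactly the hypothesis $g\geq 3$ and is itself established via the lantern relation. Concretely, I would fix an embedded four-holed sphere $\Sigma_{0,4}\hookrightarrow\Sigma_g$ over the first few handles, with boundary curves $y_1,y_2,y_3,y_4$ and interior curves $x_1,x_2,x_3$ as in Figure~\ref{fig:lantern}, arranged so that the seven curves are, up to isotopy, among the Humphries curves and curves obtained from them by a single Dehn twist. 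Since each $y_i$ is disjoint from the other six curves of the configuration, the twists $Y_1,Y_2,Y_3$ commute with all of $X_1,X_2,X_3$, so the lantern relation $X_1X_2X_3=Y_1Y_2Y_3Y_4$ rearranges into
\[
Y_4=(X_1Y_1^{-1})(X_2Y_2^{-1})(X_3Y_3^{-1}).
\]
Rewriting each factor $X_iY_i^{-1}$ as a word in the listed generators — using the braid and commutativity relations among the Humphries curves to express the $X_i$ in terms of Humphries twists — exhibits $Y_4$, a twist about a curve that is (or is connected by differences to) a Humphries curve, as an element of $G$, which closes the argument.

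The crux is this middle step: choosing the embedded four-holed sphere and carrying out the rewriting so that each of the three factors genuinely lands in $G$. The obstruction to doing it naively is that the three interior curves of a lantern pairwise meet in two points, whereas any two Humphries curves meet at most once; so the $x_i$ cannot themselves be Humphries curves, and one has to realize each as $x_i=t_c(f)$ for suitable Humphries curves $c,f$, giving $X_i=CFC^{-1}$, and then check that $CFC^{-1}Y_i^{-1}$ collapses — via braid and commutativity relations — to a product of the generators $A_1B_1^{-1}$, $B_1C_1^{-1}$, $B_iB_{i+1}^{-1}$, $C_jC_{j+1}^{-1}$. This is a finite, explicit verification once the picture is pinned down; producing a picture for which the bookkeeping closes up uniformly for all $g\geq 3$, with the genus entering only through the location of the embedded four-holed sphere, is where the real work lies.
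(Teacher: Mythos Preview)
Your overall architecture matches the paper's: first show that $G$ contains every ``difference'' $FF'^{-1}$ of Humphries twists (your connectedness argument is cleaner than the paper's ad hoc list, but equivalent), then use the lantern relation to produce one honest Dehn twist inside $G$, and finally pull all Humphries generators in. The paper uses exactly the embedded lantern you are reaching for, with boundary curves $a_1,c_1,c_2,a_3$ and interior curves $a_2,d_1,d_2$, rewritten as $A_3=(A_2C_1^{-1})(D_1C_2^{-1})(D_2A_1^{-1})$.

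The genuine gap is in your lantern step, and you flag it yourself: you never actually exhibit the three factors as elements of $G$. Your proposed mechanism --- write each interior twist as $X_i=CFC^{-1}$ for Humphries twists $C,F$ and then collapse $CFC^{-1}Y_i^{-1}$ via braid relations --- is not how the paper proceeds, and it is not clear it can be made to work: the curves $d_1,d_2$ are not of the form $t_c(f)$ for Humphries curves, and a braid-relation simplification of $CFC^{-1}Y_i^{-1}$ does not obviously terminate in a word in your generators. The paper's key idea, which your proposal is missing, is to use \emph{conjugation inside $G$}: one writes down explicit products of the generators, e.g.\ $F=(A_1B_2^{-1})(A_1C_1^{-1})(A_1C_2^{-1})(A_1B_2^{-1})\in G$, checks geometrically that $F$ carries the pair $(a_2,a_1)$ to $(d_2,a_1)$, and concludes $D_2A_1^{-1}=F(A_2A_1^{-1})F^{-1}\in G$; a second such element handles $D_1C_2^{-1}$. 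This is the ``real work'' you allude to, and without it the argument is incomplete.
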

\begin{proof}
Let $\Gamma$ be the subgroup of $\mod(\Sigma_g)$ generated by the set
\[
\{ A_1B_1^{-1}, A_2B_2^{-1}, B_1C_1^{-1}, B_iB_{i+1}^{-1},C_jC_{j+1}^{-1} \, \}_{\forall i, j}.
\]
Then
\begin{equation*}
\{ A_2 A_1^{-1}, B_iB_j^{-1}, C_iC_j^{-1}, B_iC_j^{-1}, A_1B_i^{-1}, A_2B_i^{-1}, A_1C_j^{-1}, A_2C_j^{-1}\}_{\forall i, j} \subset \Gamma.
\end{equation*} 
\noindent For example, we have $A_2 C_1^{-1} =(A_2 B_2^{-1}) (B_1 B_2^{-1})^{-1}(B_1 C_1^{-1}) \in \Gamma$. Since $C_j C_{j+1}^{-1} \in \Gamma$, multiplying these elements in increasing index, we get $C_1 C_j^{-1} \in \Gamma$. So $A_2 C_j^{-1} \in \Gamma$. The others can be easily verified in a similar fashion.

\begin{figure}[hbt]	
\centering
\begin{tikzpicture}[every node/.style={inner sep=0pt}, scale=0.85]
\begin{scope} [yshift=0cm, scale=0.6]
 \draw[very thick,rounded corners=14pt] (-7.8,-2) -- (7.8,-2)--(8.75,0)-- (7.8,2) -- (-7.8,2) --(-8.75, 0)-- cycle;
 \draw[very thick, xshift=1.3cm] (0,0) circle [radius=0.6cm];
 \draw[very thick,fill, xshift=3.4cm] (0,0) circle [radius=0.02cm];
 \draw[very thick,fill, xshift=3.9cm] (0,0) circle [radius=0.02cm];
 \draw[very thick,fill, xshift=4.4cm] (0,0) circle [radius=0.02cm];
 
 \draw[very thick, xshift=6.5cm] (0,0) circle [radius=0.6cm];
 \draw[very thick, xshift=-1.3cm] (0,0) circle [radius=0.6cm];
 \draw[very thick, xshift=-3.9cm] (0,0) circle [radius=0.6cm];
 \draw[very thick, xshift=-6.5cm] (0,0) circle [radius=0.6cm];

\draw[thick, blue, rounded corners=4pt, xshift=-7.8cm] (-0.7,-0.03) -- (-0.4,-0.23)--(0.4,-0.23) --(0.7,-0.03);
 \draw[thick, blue, dashed, rounded corners=4pt, xshift=-7.8cm] (-0.7,0.03) -- (-0.4,0.23)--(0.4,0.23) --(0.7,0.03);
 \draw[thick, blue, rounded corners=4pt, xshift=-5.2cm] (-0.7,-0.03) -- (-0.4,-0.23)--(0.4,-0.23) --(0.7,-0.03);
 \draw[thick, blue, dashed, rounded corners=4pt, xshift=-5.3cm] (-0.7,0.03) -- (-0.4,0.23)--(0.4,0.23) --(0.7,0.03);
 \draw[thick, blue, rounded corners=4pt, xshift=-2.6cm] (-0.7,-0.03) -- (-0.4,-0.23)--(0.4,-0.23) --(0.7,-0.03);
 \draw[thick, blue, dashed, rounded corners=4pt, xshift=-2.6cm] (-0.7,0.03) -- (-0.4,0.23)--(0.4,0.23) --(0.7,0.03);
\draw[thick, red, rounded corners=3pt] (-3.93,0.6) -- (-4.13, 1)--(-4.13,1.6) --(-3.93,2);
 \draw[thick, red, dashed, rounded corners=3pt] (-3.87,0.6) -- (-3.67, 1)--(-3.67,1.6) --(-3.87,2);
\draw[thick, blue, rounded corners=3pt] (-1.33,0.6) -- (-1.53, 1)--(-1.53,1.6) --(-1.33,2);
 \draw[thick, blue, dashed, rounded corners=3pt] (-1.27,0.6) -- (-1.07, 1)--(-1.07,1.6) --(-1.27,2);
\node[blue] at (-7.6 ,-0.7) {$a_1$};
\node[blue] at (-5 ,-0.7) {$c_1$};
\node[blue] at (-2.4 ,-0.7) {$c_2$};
\node[red] at (-4.7,1.2) {$a_2$};
\node[blue] at (-2.1,1.2) {$a_3$};
\end{scope}

\begin{scope} [yshift=-2.8cm, scale=0.6]
 \draw[very thick,rounded corners=14pt] (-7.8,-2) -- (7.8,-2)--(8.75,0)-- (7.8,2) -- (-7.8,2) --(-8.75, 0)-- cycle;
 \draw[very thick, xshift=1.3cm] (0,0) circle [radius=0.6cm];
 \draw[very thick,fill, xshift=3.4cm] (0,0) circle [radius=0.02cm];
 \draw[very thick,fill, xshift=3.9cm] (0,0) circle [radius=0.02cm];
 \draw[very thick,fill, xshift=4.4cm] (0,0) circle [radius=0.02cm];
 
 \draw[very thick, xshift=6.5cm] (0,0) circle [radius=0.6cm];
 \draw[very thick, xshift=-1.3cm] (0,0) circle [radius=0.6cm];
 \draw[very thick, xshift=-3.9cm] (0,0) circle [radius=0.6cm];
 \draw[very thick, xshift=-6.5cm] (0,0) circle [radius=0.6cm];
\draw[thick, red, rounded corners=8pt]    (-6,0.3) -- (-5.5, 1.2)--(-3.7, 1.5)--(-1.9,1.2) --(-1.5,0.53);
\draw[thick, red,  dashed,  rounded corners=3pt] (-5.9,0.25) --(-5.7,0.3)-- (-5, 1.1)--(-3.9,1.2)--(-2.8,1.1)--(-1.9,0.6) --(-1.6,0.5);
\draw[thick, red, dashed, rounded corners=4pt, xshift=-2.6cm] (-0.7,-0.03) -- (-0.4,-0.23)--(0.4,-0.23) --(0.7,-0.03);
\draw[thick, red, dashed, rounded corners=3pt] (-6.53,0.6) -- (-6.73, 1)--(-6.73,1.6) --(-6.53,2);
\draw[thick, red, rounded corners=4pt] (-6.4,2) --(-6,1.5) --(-3.5, 1.2)--(-2.2,0.4)--(-1.9,0.05);
\draw[thick, red, rounded corners=6pt] (-6.4,0.6) --(-6,1) --(-4.3, 1.1)--(-2.8, 0.6)--(-3.3,0.05);

\node[red] at (-1.5 ,1.5) {$d_1$};
\node[red] at (-7.3,1.2) {$d_2$};
\end{scope}
\end{tikzpicture}
\caption{The curves of the embedded lantern relation $A_1C_1C_2A_3=A_2D_1D_2$ in $\Sigma_g$.}  
\label{fig:lanternembed}
\end{figure}
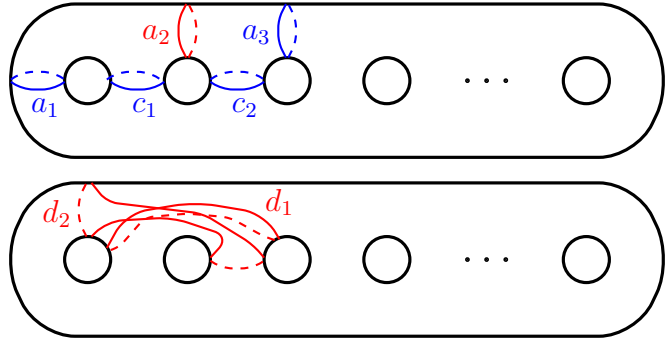

By the lantern relation, the following holds in $\MCG$:
\[
A_1C_1C_2A_3=A_2D_1D_2,
\]
where the curves are as in Figure~\ref{fig:lanternembed}. We can rewrite this relation as
\begin{equation}
   A_3=(A_2 C_1^{-1})(D_1 C_2^{-1})(D_2 A_1^{-1}), \label{eqn:lantern}
\end{equation}
as $C_1, C_2$ and $A_3$ commute with all the other Dehn twists here.
Note that $A_2 C_1^{-1}\in \Gamma$. One can check that the diffeomorphism 
\[
F=(A_1B_2^{-1}) (A_1C_1^{-1}) (A_1C_2^{-1}) (A_1B_2^{-1})
\] 
maps the pair of simple closed curves $(a_2,a_1)$ to $(d_2,a_1)$. Since $F \in \Gamma$, we have $F(A_2A_1^{-1})F^{-1}=D_2A_1^{-1} \in \Gamma$ by the conjugation relation.  We also have
$D_2C_2^{-1}=(D_2A_1^{-1})(A_1C_2^{-1})\in \Gamma$.

Likewise, the diffeomorphism
\[
G=(C_2B_1^{-1}) (C_2A_1^{-1}) (C_2C_1^{-1}) (C_2B_1^{-1})
\]
maps the pair  of simple closed curves $(d_2,c_2)$ to $(d_1,c_2)$, and is in $\Gamma$. Therefore, the element
$G(D_2C_2^{-1})G^{-1}=D_1C_2^{-1}$ is also in $\Gamma$, once again by the conjugation relation.

Now the equality~\eqref{eqn:lantern} implies that the Dehn twist $A_3$ is in $\Gamma$.
Also, $A_3 (B_3B_1^{-1}) A_3  (B_1B_3^{-1})A_3 ^{-1}=B_3$ is in  $\Gamma$. 
It now follows easily
that $A_1, A_2, B_1,B_2,\ldots, B_g$ and $C_1,C_2,\dots, C_{g-1}$ are all contained in $\Gamma$. 
We conclude from Theorem~\ref{thm:DLK} that $\Gamma=\mod(\Sigma_g)$.
 \end{proof}

We now present various new generators for the mapping class group $\MCG$ we need for our main theorem.

\begin{theorem} \label{prop:g=odd} 
For $g\geq 5$, the mapping class group  $\mod(\Sigma_g)$ is generated by 
$R$ and $A_1A_2C_2^{-1}B_4^{-1}.$
\end{theorem}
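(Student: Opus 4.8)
The plan is to show that the subgroup $\Gamma:=\langle R,\ W\rangle\le\mod(\Sigma_g)$, where $W=A_1A_2C_2^{-1}B_4^{-1}$, is all of $\mod(\Sigma_g)$, by producing inside $\Gamma$ the generating set of Lemma~\ref{lem:G=Mod}: the elements $A_1B_1^{-1}$, $A_2B_2^{-1}$, $B_1C_1^{-1}$, all $B_iB_{i+1}^{-1}$ with $1\le i\le g-1$, and all $C_jC_{j+1}^{-1}$ with $1\le j\le g-2$.

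The main tool is the rotational symmetry $R$ of model~(i) in Figure~\ref{fig:models}: it cyclically shifts the curves $a_i,b_i,c_i$ (indices modulo $g$), so conjugation by $R$ sends any word in the twists $A_i,B_i,C_i$ to the same word with every index increased by one. Thus $W_i:=R^iWR^{-i}=A_{i+1}A_{i+2}C_{i+2}^{-1}B_{i+4}^{-1}\in\Gamma$ for all $i$, and -- what is essential for the bookkeeping -- as soon as one member of an $R$-invariant family lies in $\Gamma$, the whole family does. Hence it suffices to exhibit in $\Gamma$ a single element of each of the types $C_jC_{j+1}^{-1}$, $B_iB_{i+1}^{-1}$, $A_iB_i^{-1}$ (the last yielding both $A_1B_1^{-1}$ and $A_2B_2^{-1}$), and $B_iC_i^{-1}$.

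To extract these, first record the incidences among the relevant curves from the chain picture of model~(iii): each $b_i$ meets only $a_i$, $c_{i-1}$ and $c_i$ (once each), while all other pairs among the $a$'s, $b$'s and $c$'s are disjoint; in particular $a_1,a_2,c_2,b_4$ are pairwise disjoint for $g\ge 5$, so $W$ is a product of commuting Dehn twists. Now take short products and conjugates of the $W_i$, simplifying by the commutativity and braid relations only. For example, one computes
\[
W_iW_{i+1}^{-1}=(A_{i+1}A_{i+3}^{-1})\,B_{i+5}B_{i+4}^{-1}\,C_{i+3}C_{i+2}^{-1},
\]
with the factor $A_{i+1}A_{i+3}^{-1}$ commuting with everything to its right; pairing such elements over suitably spaced values of the shift (and conjugating back by powers of $R$) cancels the $A$-contributions and then the $B$-contributions, yielding first some $C_jC_{j+1}^{-1}$ and then some $B_iB_{i+1}^{-1}$. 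Feeding these back into the $W_i$ isolates an $A_iB_i^{-1}$ and a $B_iC_i^{-1}$. By the orbit remark, all the required families then lie in $\Gamma$, so $\Gamma$ contains the generating set of Lemma~\ref{lem:G=Mod} and $\Gamma=\mod(\Sigma_g)$.

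The real obstacle is the combinatorial step above: peeling a single ``difference'' generator off the length-four multitwist $W$ forces a careful choice of which products $W_iW_j^{\pm1}$ and which conjugations to take, and one has to track the few non-commuting incidences (each $b_i$ with $a_i$, $c_{i-1}$, $c_i$) and check that the index arithmetic modulo $g$ produces no accidental coincidences -- this is where $g\ge 5$ is genuinely used, and the smallest cases $g=5,6$ may need a slightly more hands-on treatment. There is also a parity subtlety: a product of the $W_i$ around the full cycle $\mathbb{Z}/g$ behaves differently for $g$ even versus odd, so the extraction is cleanest when carried out with products of shift $1$ or $2$ rather than around all of $\mathbb{Z}/g$.
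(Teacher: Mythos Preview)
Your framework is right---reduce to Lemma~\ref{lem:G=Mod} and use the $R$--action to propagate any single ``difference'' element $B_iC_{i-1}^{-1}$, $A_iB_i^{-1}$, $B_iC_i^{-1}$ to its whole $R$--orbit---but the heart of the argument, the extraction of that first difference element, is not carried out, and the method you sketch does not obviously work. Your explicit example $W_iW_{i+1}^{-1}=(A_{i+1}A_{i+3}^{-1})\,B_{i+5}B_{i+4}^{-1}C_{i+3}C_{i+2}^{-1}$ is correct, but it is \emph{not} a multitwist: $b_{i+4}$ and $c_{i+3}$ meet once, so the tail $B_{i+4}^{-1}C_{i+3}$ does not commute and cannot be split off. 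Pairing with an $R$--translate by shift $2$ (to try to cancel the $A$--part) produces $A_{i+1}A_{i+3}^{-2}A_{i+5}$ rather than a cancellation, and any product over all of $\mathbb{Z}/g$ accumulates a long alternating word in non-commuting $B$'s and $C$'s rather than a clean $C_jC_{j+1}^{-1}$. In short, products of the $W_i$ and their $R$--conjugates alone do not visibly peel off a length--two difference; something beyond commutativity and the braid relation is needed.

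The missing idea is a \emph{geometric} conjugation inside $\Gamma$, not by $R$ but by a product of the $W_i$ themselves. With $F_1=W$ and $F_2=RF_1R^{-1}=A_2A_3C_3^{-1}B_5^{-1}$, the diffeomorphism $F_2F_1$ happens to carry the curve $c_3$ to $b_4$ while fixing $a_2,a_3,b_5$; hence $(F_2F_1)F_2(F_2F_1)^{-1}=A_2A_3B_4^{-1}B_5^{-1}$, and dividing this into $F_2$ yields $B_4C_3^{-1}\in\Gamma$ immediately. The same trick, applied once more, gives $A_3B_3^{-1}$, and a third instance (after first using the already--obtained differences to massage $F_1$) gives $B_gC_g^{-1}$. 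This conjugation--by--$F_2F_1$ step is the genuine content; once you have it the rest is exactly your orbit argument, and it works uniformly for all $g\ge5$ with no parity issues.
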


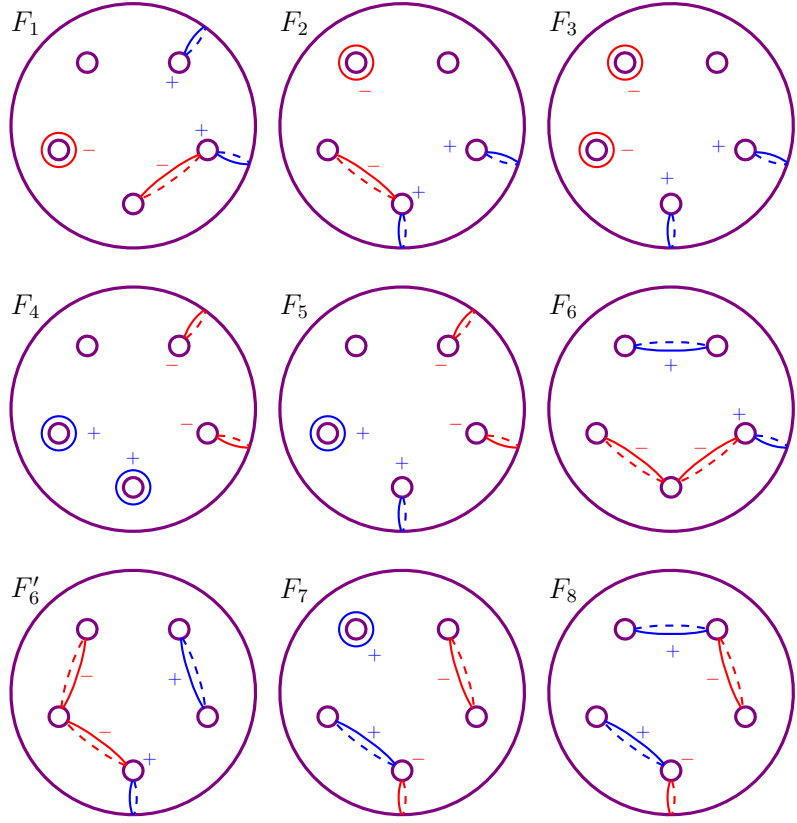
\begin{figure}
\begin{tikzpicture}[scale=0.65]
\begin{scope} [xshift=0cm, yshift=0cm]
 \draw[very thick, violet] (0,0) circle [radius=2.5cm];
 \draw[very thick, violet, rotate=36] (0,1.6) circle [radius=0.2cm]; 
 \draw[very thick, violet, rotate=108] (0,1.6) circle [radius=0.2cm];  
 \draw[very thick, violet, rotate=-36] (0,1.6) circle [radius=0.2cm]; 
 \draw[very thick, violet, rotate=-108] (0,1.6) circle [radius=0.2cm]; 
 \draw[very thick, violet, rotate=-180] (0,1.6) circle [radius=0.2cm];  
 \draw[thick, red, rotate=-144, rounded corners=4pt] (-0.76,1.27)--(-0.4, 1.2)--(0.4,1.2)--(0.76,1.27);  
 \draw[thick, red, dashed,  rotate=-144, rounded corners=4pt] (-0.76,1.3)--(-0.4, 1.37)--(0.4,1.37)--(0.76,1.3);  
 \draw[thick, red, rotate=108] (0,1.6) circle [radius=0.35cm]; 
 \draw[thick, blue, rotate=-36, rounded corners=4pt] (-0.02,1.8)--(-0.1, 2.15)--(-0.02,2.5);  
 \draw[thick, blue, dashed, rotate=-36, rounded corners=4pt]  (0.02,1.8)--(0.1, 2.15)--(0.02,2.5); 
 \draw[thick, dashed, blue, rotate=-108, rounded corners=4pt] (-0.02,1.8)--(-0.1, 2.15)--(-0.02,2.5);  
 \draw[thick, blue, rotate=-108, rounded corners=4pt]  (0.02,1.8)--(0.1, 2.15)--(0.02,2.5); 
  \node[scale=0.6, blue] at (0.8,0.9) {$+$};
  \node[scale=0.6, red] at (-0.9,-0.5) {$-$};
  \node[scale=0.6, red] at (0.6,-0.8) {$-$};
  \node[scale=0.6, blue] at (1.4,-0.1) {$+$};
\end{scope}

\begin{scope} [xshift=5.5cm, yshift=0cm, rotate=-72]
  \draw[very thick, violet] (0,0) circle [radius=2.5cm];
 \draw[very thick, violet, rotate=36] (0,1.6) circle [radius=0.2cm]; 
 \draw[very thick, violet, rotate=108] (0,1.6) circle [radius=0.2cm];  
 \draw[very thick, violet, rotate=-36] (0,1.6) circle [radius=0.2cm]; 
 \draw[very thick, violet, rotate=-108] (0,1.6) circle [radius=0.2cm]; 
 \draw[very thick, violet, rotate=-180] (0,1.6) circle [radius=0.2cm];  
 \draw[thick, red, rotate=-144, rounded corners=4pt] (-0.76,1.27)--(-0.4, 1.2)--(0.4,1.2)--(0.76,1.27);  
 \draw[thick, red, dashed,  rotate=-144, rounded corners=4pt] (-0.76,1.3)--(-0.4, 1.37)--(0.4,1.37)--(0.76,1.3);  
 \draw[thick, red, rotate=108] (0,1.6) circle [radius=0.35cm]; 
 \draw[thick, blue, rotate=-36, rounded corners=4pt] (-0.02,1.8)--(-0.1, 2.15)--(-0.02,2.5);  
 \draw[thick, blue, dashed, rotate=-36, rounded corners=4pt]  (0.02,1.8)--(0.1, 2.15)--(0.02,2.5); 
 \draw[thick, dashed, blue, rotate=-108, rounded corners=4pt] (-0.02,1.8)--(-0.1, 2.15)--(-0.02,2.5);  
 \draw[thick, blue, rotate=-108, rounded corners=4pt]  (0.02,1.8)--(0.1, 2.15)--(0.02,2.5); 
  \node[scale=0.6, blue] at (0.7,0.8) {$+$};
  \node[scale=0.6, red] at (-0.9,-0.5) {$-$};
  \node[scale=0.6, red] at (0.6,-0.8) {$-$};
  \node[scale=0.6, blue] at (1.4,-0.1) {$+$};
   \end{scope}

\begin{scope} [xshift=11cm, yshift=0cm, rotate=-72]
\draw[very thick, violet] (0,0) circle [radius=2.5cm];
 \draw[very thick, violet, rotate=36] (0,1.6) circle [radius=0.2cm]; 
 \draw[very thick, violet, rotate=108] (0,1.6) circle [radius=0.2cm];  
 \draw[very thick, violet, rotate=-36] (0,1.6) circle [radius=0.2cm]; 
 \draw[very thick, violet, rotate=-108] (0,1.6) circle [radius=0.2cm]; 
 \draw[very thick, violet, rotate=-180] (0,1.6) circle [radius=0.2cm];  
 \draw[thick, red, rotate=108] (0,1.6) circle [radius=0.35cm]; 
\draw[thick, red, rotate=180] (0,1.6) circle [radius=0.35cm]; 
 \draw[thick, blue, rotate=-36, rounded corners=4pt] (-0.02,1.8)--(-0.1, 2.15)--(-0.02,2.5);  
 \draw[thick, blue, dashed, rotate=-36, rounded corners=4pt]  (0.02,1.8)--(0.1, 2.15)--(0.02,2.5); 
 \draw[thick, dashed, blue, rotate=-108, rounded corners=4pt] (-0.02,1.8)--(-0.1, 2.15)--(-0.02,2.5);  
 \draw[thick, blue, rotate=-108, rounded corners=4pt]  (0.02,1.8)--(0.1, 2.15)--(0.02,2.5); 
  \node[scale=0.6, blue] at (0.7,0.8) {$+$};
  \node[scale=0.6, red] at (-0.9,-0.5) {$-$};
  \node[scale=0.6, red] at (0.2,-1) {$-$};
  \node[scale=0.6, blue] at (1,-0.4) {$+$};
    \end{scope}

\begin{scope} [xshift=0cm, yshift=-5.8cm]
 \draw[very thick, violet] (0,0) circle [radius=2.5cm];
 \draw[very thick, violet, rotate=36] (0,1.6) circle [radius=0.2cm]; 
 \draw[very thick, violet, rotate=108] (0,1.6) circle [radius=0.2cm];  
 \draw[very thick, violet, rotate=-36] (0,1.6) circle [radius=0.2cm]; 
 \draw[very thick, violet, rotate=-108] (0,1.6) circle [radius=0.2cm]; 
 \draw[very thick, violet, rotate=-180] (0,1.6) circle [radius=0.2cm];  
 \draw[thick, blue, rotate=108] (0,1.6) circle [radius=0.35cm]; 
\draw[thick, blue, rotate=180] (0,1.6) circle [radius=0.35cm]; 
 \draw[thick, red, rotate=-36, rounded corners=4pt] (-0.02,1.8)--(-0.1, 2.15)--(-0.02,2.5);  
 \draw[thick, red, dashed, rotate=-36, rounded corners=4pt]  (0.02,1.8)--(0.1, 2.15)--(0.02,2.5); 
 \draw[thick, dashed, red, rotate=-108, rounded corners=4pt] (-0.02,1.8)--(-0.1, 2.15)--(-0.02,2.5);  
 \draw[thick, red, rotate=-108, rounded corners=4pt]  (0.02,1.8)--(0.1, 2.15)--(0.02,2.5); 
  \node[scale=0.6, red] at (0.8,0.9) {$-$};
  \node[scale=0.6, blue] at (-0.8,-0.5) {$+$};
  \node[scale=0.6, blue] at (0,-1) {$+$};
  \node[scale=0.6, red] at (1.1,-0.3) {$-$};
\end{scope}

\begin{scope} [xshift=5.5cm, yshift=-5.8cm, rotate=0]
 \draw[very thick, violet] (0,0) circle [radius=2.5cm];
 \draw[very thick, violet, rotate=36] (0,1.6) circle [radius=0.2cm]; 
 \draw[very thick, violet, rotate=108] (0,1.6) circle [radius=0.2cm];  
 \draw[very thick, violet, rotate=-36] (0,1.6) circle [radius=0.2cm]; 
 \draw[very thick, violet, rotate=-108] (0,1.6) circle [radius=0.2cm]; 
 \draw[very thick, violet, rotate=-180] (0,1.6) circle [radius=0.2cm];  
 \draw[thick, blue, rotate=108] (0,1.6) circle [radius=0.35cm]; 
 \draw[thick, red, rotate=-36, rounded corners=4pt] (-0.02,1.8)--(-0.1, 2.15)--(-0.02,2.5);  
 \draw[thick, red, dashed, rotate=-36, rounded corners=4pt]  (0.02,1.8)--(0.1, 2.15)--(0.02,2.5); 
 \draw[thick, dashed, red, rotate=-108, rounded corners=4pt] (-0.02,1.8)--(-0.1, 2.15)--(-0.02,2.5);  
 \draw[thick, red, rotate=-108, rounded corners=4pt]  (0.02,1.8)--(0.1, 2.15)--(0.02,2.5); 
 \draw[thick, dashed, blue, rotate=180, rounded corners=4pt] (-0.02,1.8)--(-0.1, 2.15)--(-0.02,2.5);  
 \draw[thick, blue, rotate=180, rounded corners=4pt]  (0.02,1.8)--(0.1, 2.15)--(0.02,2.5); 
  \node[scale=0.6, red] at (0.8,0.9) {$-$};
  \node[scale=0.6, blue] at (-0.8,-0.5) {$+$};
  \node[scale=0.6, blue] at (0,-1.1) {$+$};
  \node[scale=0.6, red] at (1.1,-0.3) {$-$};
 \end{scope}

\begin{scope} [xshift=11cm, yshift=-5.8cm, rotate=0]
 \draw[very thick, violet] (0,0) circle [radius=2.5cm];
 \draw[very thick, violet, rotate=36] (0,1.6) circle [radius=0.2cm]; 
 \draw[very thick, violet, rotate=108] (0,1.6) circle [radius=0.2cm];  
 \draw[very thick, violet, rotate=-36] (0,1.6) circle [radius=0.2cm]; 
 \draw[very thick, violet, rotate=-108] (0,1.6) circle [radius=0.2cm]; 
 \draw[very thick, violet, rotate=-180] (0,1.6) circle [radius=0.2cm];  
 \draw[thick, red, rotate=-144, rounded corners=4pt] (-0.76,1.27)--(-0.4, 1.2)--(0.4,1.2)--(0.76,1.27);  
 \draw[thick, red, dashed,  rotate=-144, rounded corners=4pt] (-0.76,1.3)--(-0.4, 1.37)--(0.4,1.37)--(0.76,1.3);  
 \draw[thick, blue, rotate=0, rounded corners=4pt] (-0.76,1.27)--(-0.4, 1.2)--(0.4,1.2)--(0.76,1.27);  
 \draw[thick, blue, dashed,  rotate=0, rounded corners=4pt] (-0.76,1.3)--(-0.4, 1.37)--(0.4,1.37)--(0.76,1.3);  
 \draw[thick, red, rotate=144, rounded corners=4pt] (-0.76,1.27)--(-0.4, 1.2)--(0.4,1.2)--(0.76,1.27);  
 \draw[thick, red, dashed,  rotate=144, rounded corners=4pt] (-0.76,1.3)--(-0.4, 1.37)--(0.4,1.37)--(0.76,1.3);  
 \draw[thick, dashed, blue, rotate=-108, rounded corners=4pt] (-0.02,1.8)--(-0.1, 2.15)--(-0.02,2.5);  
 \draw[thick, blue, rotate=-108, rounded corners=4pt]  (0.02,1.8)--(0.1, 2.15)--(0.02,2.5); 
  \node[scale=0.6, blue] at (0,0.9) {$+$};
  \node[scale=0.6, red] at (-0.6,-0.8) {$-$};
  \node[scale=0.6, red] at (0.6,-0.8) {$-$};
  \node[scale=0.6, blue] at (1.4,-0.1) {$+$};
\end{scope}

\begin{scope} [xshift=0cm, yshift=-11.6cm, rotate=-72]
 \draw[very thick, violet] (0,0) circle [radius=2.5cm];
 \draw[very thick, violet, rotate=36] (0,1.6) circle [radius=0.2cm]; 
 \draw[very thick, violet, rotate=108] (0,1.6) circle [radius=0.2cm];  
 \draw[very thick, violet, rotate=-36] (0,1.6) circle [radius=0.2cm]; 
 \draw[very thick, violet, rotate=-108] (0,1.6) circle [radius=0.2cm]; 
 \draw[very thick, violet, rotate=-180] (0,1.6) circle [radius=0.2cm];  
 \draw[thick, red, rotate=-144, rounded corners=4pt] (-0.76,1.27)--(-0.4, 1.2)--(0.4,1.2)--(0.76,1.27);  
 \draw[thick, red, dashed,  rotate=-144, rounded corners=4pt] (-0.76,1.3)--(-0.4, 1.37)--(0.4,1.37)--(0.76,1.3);  
 \draw[thick, blue, rotate=0, rounded corners=4pt] (-0.76,1.27)--(-0.4, 1.2)--(0.4,1.2)--(0.76,1.27);  
 \draw[thick, blue, dashed,  rotate=0, rounded corners=4pt] (-0.76,1.3)--(-0.4, 1.37)--(0.4,1.37)--(0.76,1.3);  
 \draw[thick, red, rotate=144, rounded corners=4pt] (-0.76,1.27)--(-0.4, 1.2)--(0.4,1.2)--(0.76,1.27);  
 \draw[thick, red, dashed,  rotate=144, rounded corners=4pt] (-0.76,1.3)--(-0.4, 1.37)--(0.4,1.37)--(0.76,1.3);  
 \draw[thick, dashed, blue, rotate=-108, rounded corners=4pt] (-0.02,1.8)--(-0.1, 2.15)--(-0.02,2.5);  
 \draw[thick, blue, rotate=-108, rounded corners=4pt]  (0.02,1.8)--(0.1, 2.15)--(0.02,2.5); 
  \node[scale=0.6, blue] at (0,0.9) {$+$};
  \node[scale=0.6, red] at (-0.6,-0.8) {$-$};
  \node[scale=0.6, red] at (0.6,-0.8) {$-$};
  \node[scale=0.6, blue] at (1.4,-0.1) {$+$};
\end{scope}

\begin{scope} [xshift=5.5cm, yshift=-11.6cm, rotate=-72]
 \draw[very thick, violet] (0,0) circle [radius=2.5cm];
 \draw[very thick, violet, rotate=36] (0,1.6) circle [radius=0.2cm]; 
 \draw[very thick, violet, rotate=108] (0,1.6) circle [radius=0.2cm];  
 \draw[very thick, violet, rotate=-36] (0,1.6) circle [radius=0.2cm]; 
 \draw[very thick, violet, rotate=-108] (0,1.6) circle [radius=0.2cm]; 
 \draw[very thick, violet, rotate=-180] (0,1.6) circle [radius=0.2cm];  
 \draw[thick,blue, rotate=-144, rounded corners=4pt] (-0.76,1.27)--(-0.4, 1.2)--(0.4,1.2)--(0.76,1.27);  
 \draw[thick, blue, dashed,  rotate=-144, rounded corners=4pt] (-0.76,1.3)--(-0.4, 1.37)--(0.4,1.37)--(0.76,1.3);  
 \draw[thick, red, rotate=0, rounded corners=4pt] (-0.76,1.27)--(-0.4, 1.2)--(0.4,1.2)--(0.76,1.27);  
 \draw[thick, red, dashed,  rotate=0, rounded corners=4pt] (-0.76,1.3)--(-0.4, 1.37)--(0.4,1.37)--(0.76,1.3);  
 \draw[thick, dashed, red, rotate=-108, rounded corners=4pt] (-0.02,1.8)--(-0.1, 2.15)--(-0.02,2.5);  
 \draw[thick, red, rotate=-108, rounded corners=4pt]  (0.02,1.8)--(0.1, 2.15)--(0.02,2.5); 
 \draw[thick, blue, rotate=108] (0,1.6) circle [radius=0.35cm]; 

  \node[scale=0.6, red] at (0,0.9) {$-$};
  \node[scale=0.6, blue] at (-0.9,-0.3) {$+$};
  \node[scale=0.6, blue] at (0.6,-0.8) {$+$};
  \node[scale=0.6, red] at (1.4,-0.1) {$-$};
\end{scope}

\begin{scope} [xshift=11cm, yshift=-11.6cm, rotate=-72]
 \draw[very thick, violet] (0,0) circle [radius=2.5cm];
 \draw[very thick, violet, rotate=36] (0,1.6) circle [radius=0.2cm]; 
 \draw[very thick, violet, rotate=108] (0,1.6) circle [radius=0.2cm];  
 \draw[very thick, violet, rotate=-36] (0,1.6) circle [radius=0.2cm]; 
 \draw[very thick, violet, rotate=-108] (0,1.6) circle [radius=0.2cm]; 
 \draw[very thick, violet, rotate=-180] (0,1.6) circle [radius=0.2cm];  
 \draw[thick,blue, rotate=-144, rounded corners=4pt] (-0.76,1.27)--(-0.4, 1.2)--(0.4,1.2)--(0.76,1.27);  
 \draw[thick, blue, dashed,  rotate=-144, rounded corners=4pt] (-0.76,1.3)--(-0.4, 1.37)--(0.4,1.37)--(0.76,1.3);  
 \draw[thick, red, rotate=0, rounded corners=4pt] (-0.76,1.27)--(-0.4, 1.2)--(0.4,1.2)--(0.76,1.27);  
 \draw[thick, red, dashed,  rotate=0, rounded corners=4pt] (-0.76,1.3)--(-0.4, 1.37)--(0.4,1.37)--(0.76,1.3);  
 \draw[thick, blue, rotate=72, rounded corners=4pt] (-0.76,1.27)--(-0.4, 1.2)--(0.4,1.2)--(0.76,1.27);  
 \draw[thick,blue, dashed,  rotate=72, rounded corners=4pt] (-0.76,1.3)--(-0.4, 1.37)--(0.4,1.37)--(0.76,1.3);  
 \draw[thick, dashed, red, rotate=-108, rounded corners=4pt] (-0.02,1.8)--(-0.1, 2.15)--(-0.02,2.5);  
 \draw[thick, red, rotate=-108, rounded corners=4pt]  (0.02,1.8)--(0.1, 2.15)--(0.02,2.5); 

%
  \node[scale=0.6, red] at (0,0.9) {$-$};
  \node[scale=0.6, blue] at (-0.8,0.3) {$+$};
  \node[scale=0.6, blue] at (0.6,-0.8) {$+$};
  \node[scale=0.6, red] at (1.4,-0.1) {$-$};
\end{scope}

\node[scale=0.9] at (-2.2,2.1) {$F_1$};
\node[scale=0.9] at (3.3,2.1) {$F_2$};
\node[scale=0.9] at (8.8,2.1) {$F_3$};
\node[scale=0.9] at (-2.2, -3.7) {$F_4$};
\node[scale=0.9] at (3.3,-3.7) {$F_5$};
\node[scale=0.9] at (8.8,-3.7) {$F_6$};
\node[scale=0.9] at (-2.2,-9.5) {$F'_6$};
\node[scale=0.9] at (3.3,-9.5) {$F_7$};
\node[scale=0.9] at (8.8,-9.5) {$F_8$};

\end{tikzpicture}
 \caption{The Dehn twist curves of $F_1, \ldots, F_8$ and $F'_6$ in the proof of Theorem~\ref{prop:g=odd} drawn on $\Sigma_5$.}
\end{figure}

\begin{proof}
First note that the rotation $R$ on $\Sigma_g$ maps each $a_i, b_i, c_i$ to 
$a_{i+1}, b_{i+1}, c_{i+1}$. Here, the indices are considered modulo $g$.

Let $F_1:=A_1A_2C_2^{-1}B_4^{-1}$ and let $\Gamma$ be the subgroup of $\MCG$ generated
by $R$ and  $F_1$. We make the following series of observations:

$F_2:=RF_1R^{-1}=A_2A_3C_3^{-1}B_5^{-1} \in \Gamma$.

$F_3:=(F_2F_1)F_2(F_2F_1)^{-1}=A_2A_3B_4^{-1}B_5^{-1} \in \Gamma$.

\noindent Let us spell out the details of this calculation, as we will have several others akin to this one.
It is easy to see that the diffeomorphism $F_2F_1$ maps 
the curves $a_2, a_3,c_3,b_5$ to $a_2, a_3,b_4,b_5$, respectively, so that
\begin{eqnarray*}
F_3 
& :=& (F_2F_1)F_2(F_2F_1)^{-1} \\
& = & (F_2F_1) (A_2A_3C_3^{-1}B_5^{-1}) (F_2F_1)^{-1} \\
& = & A_2A_3B_4^{-1}B_5^{-1} . 
\end{eqnarray*}


We then have
$F_3^{-1}F_2=B_4C_3^{-1}\in \Gamma$  and  $ F_3F_2^{-1}=B_4^{-1}C_3\in \Gamma.$
By conjugating these elements with powers of $R$, we see that 
\[
B_i C_{i-1}^{-1} \in \Gamma \ \mbox{ and }\ B_i^{-1}C_{i-1}\in \Gamma
\]
 for all $i$. 
 
 We also have 
 \begin{eqnarray*}
 F_4
 &:=& R^{-1}F_3^{-1}R=A_1^{-1}A_2^{-1}B_3B_4 \in \Gamma,\\
 F_5
 &:=& (F_4F_3)F_4(F_4F_3)^{-1}=A_1^{-1}A_2^{-1}A_3B_4 \in \Gamma.
 \end{eqnarray*}
  Thus, 
  $F_5F_4^{-1}=A_3B_3^{-1}\in \Gamma $ and  $ F_5^{-1}F_4=A_3^{-1}B_3\in \Gamma$. 
  Again, 
  by conjugating with powers of $R$, we conclude that 
  \begin{eqnarray} \label{F=aibi}
 A_iB_i^{-1}, A_i^{-1}B_i \in \Gamma, 
 \end{eqnarray}
 and in turn,  
 \[
 A_iC_{i-1}^{-1}\in \Gamma
 \] 
 as well, as we already have $B_i C_{i-1}^{-1} \in \Gamma$.
 
 Furthermore,
 \begin{eqnarray*}
 F_6&:=&(C_g A_1^{-1})F_1(B_4C_3^{-1})=C_gA_2 C_2^{-1}C_3^{-1} \in \Gamma,\\
 F'_6&:=& R^{-4} F_6R^4= C_{g-4} A_{g-2} C_{g-2}^{-1}C_{g-1}^{-1} \in \Gamma,\\
 F_7&:=&(F'_6)^{-1}  (C_{g-1}^{-1}B_g)= C_{g-4}^{-1}  A_{g-2} ^{-1}C_{g-2}B_g  \in \Gamma
 \end{eqnarray*}  and 
  \begin{eqnarray*}
 F_8&:=&(F_7F_6)F_7(F_7F_6)^{-1}= C_{g-4}^{-1}  A_{g-2} ^{-1}C_{g-2}C_g  \in \Gamma, 
\end{eqnarray*} 
 by a similar calculation to the one we had for $F_3$ above.
 From these, we get
 $ F_7F_8^{-1}=B_gC_g^{-1}\in \Gamma$,
  so that 
  \begin{eqnarray} \label{F=bici}
  B_iC_i^{-1}\in \Gamma
  \end{eqnarray}
   by the action of $R$.
 
 \noindent Hence we have all of the following elements in $\Gamma$:
 \begin{eqnarray}
      B_iB_{i+1}^{-1}&=&(B_iC_i^{-1})(C_iB_{i+1}^{-1}), \label{F=bler}\\
     C_iC_{i+1}^{-1}&=&(C_iB_{i+1}^{-1})(B_{i+1}C_{i+1}^{-1}).\label{F=cler}
 \end{eqnarray}

It follows from~\eqref{F=aibi}--\eqref{F=cler} and
Lemma~\ref{lem:G=Mod} that $\Gamma=\mod(\Sigma_g)$.
 \end{proof}

\begin{theorem} \label{prop:g=ufak} 
For $g\geq 3$, the mapping class group $\MCG$ is generated by 
$T, A_1A_2^{-1}$ and $A_1B_1C_1C_2^{-1}B_3^{-1}C_3^{-1}$.
\end{theorem}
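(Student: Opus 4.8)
The plan is to follow the template of Theorem~\ref{prop:g=odd}. Let $\Gamma$ be the subgroup of $\MCG$ generated by $T$, $A_1A_2^{-1}$ and $W:=A_1B_1C_1C_2^{-1}B_3^{-1}C_3^{-1}$, and show that $\Gamma$ contains the generating set $\{A_1B_1^{-1},\, A_2B_2^{-1},\, B_1C_1^{-1},\, B_iB_{i+1}^{-1},\, C_jC_{j+1}^{-1}\}$ supplied by Lemma~\ref{lem:G=Mod}; then $\Gamma=\MCG$.

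First I would record the action of the alternating rotation $T=Sh_1$ on the curves in model~(ii) of Figure~\ref{fig:models}. Since $S$ is the $2\pi/(g+1)$--rotation and $h_1$ flips the two thickened $(g+1)$--gons, fixing each meridian $b_j$ and exchanging the "top'' curve running between two handles with the "bottom'' curve between the same two handles, the composite $T$ sends $b_j\mapsto b_{j+1}$ and interchanges the families $\{a_i\}$ and $\{c_k\}$ with an index shift, all indices read cyclically. The upshot is that conjugation by powers of $T$ (a)~moves the index of any $B$ freely, and (b)~turns a relation among the $A$'s into one among the $C$'s, and after a further conjugation back into one among the $A$'s with indices shifted by two.

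Using (a)--(b), from $A_1A_2^{-1}\in\Gamma$ I would deduce that all consecutive differences $A_iA_{i+1}^{-1}$ and $C_kC_{k+1}^{-1}$ lie in $\Gamma$; this already gives the $C_jC_{j+1}^{-1}$ demanded by Lemma~\ref{lem:G=Mod}. Next observe that the curves of $W$ form two disjoint $3$--chains, $(a_1,b_1,c_1)$ and $(c_2,b_3,c_3)$, so $W=(A_1B_1C_1)(C_2^{-1}B_3^{-1}C_3^{-1})$ with the two parenthesized factors commuting. Now, in the spirit of the auxiliary diffeomorphisms $F,G$ in Lemma~\ref{lem:G=Mod} and $F_3,\dots,F_8$ in Theorem~\ref{prop:g=odd}, I would hit $W$ with a conjugating element of $\Gamma$ (built from $T$ and the $A$-- and $C$--differences already obtained) that simplifies its support, and multiply by known members of $\Gamma$ to cancel terms, reducing $W$ to a single adjacent difference such as $B_1C_1^{-1}$ (equivalently $A_1B_1^{-1}$, since $A_1C_1^{-1}=(A_1B_1^{-1})(B_1C_1^{-1})\in\Gamma$). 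Spreading this out with powers of $T$ then places $B_iB_{i+1}^{-1}$, $B_1C_1^{-1}$, $A_1B_1^{-1}$ and $A_2B_2^{-1}$ in $\Gamma$, and Lemma~\ref{lem:G=Mod} yields $\Gamma=\MCG$.

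The main obstacle is the penultimate step: $W$, unlike $A_1A_2^{-1}$, is not itself a difference of two adjacent Dehn twists, so one must find the correct combination of already--available elements of $\Gamma$, and the correct conjugating map, that collapses the six--term word to a two--term one — exactly the kind of bookkeeping carried out for $F_6,F_7,F_8$ in the previous proof, only starting from a less symmetric word. A secondary difficulty is verifying that the $T$--orbits of indices actually reach every index range required by Lemma~\ref{lem:G=Mod}: only $g$ of the $a_i$ and $g-1$ of the $c_k$ occur while $T$ has order $g+1$ (or $2(g+1)$), so one should expect a short case analysis on the parity of $g$ to guarantee the "shift by two'' in (b) sweeps through all indices.
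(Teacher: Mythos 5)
There are two genuine gaps. The first is that your description of the $T$--action on the curves is wrong, and a key deduction rests on it. $T$ does not interchange the families $\{a_i\}$ and $\{c_k\}$: it cyclically permutes the $g+1$ handle meridians of model~(ii), namely $a_1\mapsto c_1\mapsto c_2\mapsto\cdots\mapsto c_{g-1}\mapsto a_g\mapsto a_1$, and permutes $b_1,\ldots,b_{g+1}$ cyclically, but the curves $a_2,\ldots,a_{g-1}$ do not lie in this orbit at all; $T(a_2)$ is a non-standard curve (exactly the curve $d=T(a_2)$ that has to be introduced in the proof of Theorem~\ref{prop:g=even}). Hence $T(A_1A_2^{-1})T^{-1}$ is $C_1D^{-1}$ with $D=t_{T(a_2)}$, not $C_1C_2^{-1}$, and your claim that all $A_iA_{i+1}^{-1}$ and all $C_kC_{k+1}^{-1}$ follow from $A_1A_2^{-1}$ by $T$--conjugation does not go through. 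The second gap is the one you flag yourself: collapsing the six-letter word $W=A_1B_1C_1C_2^{-1}B_3^{-1}C_3^{-1}$ to a two-letter difference is the entire content of the proof, and you leave it as unresolved ``bookkeeping.'' As written, the argument is a plan rather than a proof.

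The missing idea is to use $W$ itself as the conjugator instead of trying to simplify it. One checks that $W(a_1)=b_1$, $W(b_1)=c_1$ and $W(a_2)=a_2$ (the twists $C_2^{-1},B_3^{-1},C_3^{-1}$ and $C_1$ fix $a_1$, and $A_1B_1(a_1)=b_1$; similarly $B_1C_1(b_1)=c_1$; and $a_2$ is disjoint from every twist curve of $W$). Therefore $W(A_1A_2^{-1})W^{-1}=B_1A_2^{-1}\in\Gamma$ and $W(B_1A_2^{-1})W^{-1}=C_1A_2^{-1}\in\Gamma$, which give $A_1B_1^{-1}$, $B_1C_1^{-1}$ and $A_2C_1^{-1}$ in $\Gamma$. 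Now the \emph{correct} $T$--action finishes: $T(A_1B_1^{-1})T^{-1}=C_1B_2^{-1}$ and $T(B_1C_1^{-1})T^{-1}=B_2C_2^{-1}$, whence $A_2B_2^{-1}=(A_2C_1^{-1})(C_1B_2^{-1})$, $B_1B_2^{-1}=(B_1C_1^{-1})(C_1B_2^{-1})$ and $C_1C_2^{-1}=(C_1B_2^{-1})(B_2C_2^{-1})$ all lie in $\Gamma$, and conjugating the last two by powers of $T$ (which stays within the standard curve system, since $b_i$ and $c_j$ do lie in $T$--orbits) yields every $B_iB_{i+1}^{-1}$ and $C_jC_{j+1}^{-1}$. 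Lemma~\ref{lem:G=Mod} then gives $\Gamma=\MCG$; note that no element $A_iA_{i+1}^{-1}$ with $i\geq 2$ is ever needed.
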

\begin{proof}
Let $\Gamma$ be the subgroup of $\mod(\Sigma_g)$ generated by $T, A_1A_2^{-1}$ and 
$F:=A_1B_1C_1C_2^{-1}B_3^{-1}C_3^{-1}$. 

Below we will repeatedly use the conjugation relation, both when conjugating with $F$ and with powers of $T$. The action of $T$ on $\Sigma_g$ maps $a_1$ to $c_1$, $c_i$ to $c_{i+1}$ for each $i = 1, \ldots, g-2$, $c_{g-1}$ to $a_g$, and $a_g$ back to $a_1$, whereas it maps  $b_i$ to $b_{i+1}$ for each $i=1, \ldots g$, and $b_{g+1}$ back to $b_1$.

Note that $F(a_2)=a_2$. Since $F(a_1)=b_1$ and $F(b_1)=c_1$, 
\[
F(A_1A_2^{-1})F^{-1}=B_1A_2^{-1}\in\Gamma
\]
and
\[
F(B_1A_2^{-1})F^{-1}=C_1A_2^{-1}\in\Gamma.
\]
It follows that 
\[
A_1B_1^{-1},B_1C_1^{-1}\in\Gamma. 
\]
Hence, the elements
\begin{eqnarray*}
C_1B_2^{-1}&=&T(A_1B_1^{-1})T^{-1} ,\\
A_2B_2^{-1}&=&(A_2C_1^{-1})(C_1B_2^{-1}) ,\\
B_2C_2^{-1}&=&T(B_1C_1^{-1})T^{-1} ,\\ 
C_1C_2^{-1}&=&(C_1B_2^{-1})(B_2C_2^{-1}) ,\\
B_1B_2^{-1}&=&(B_1C_1^{-1})(C_1B_2^{-1}) , \\
C_jC_{j+1}^{-1}&=&T^{j-1}(C_1C_{2}^{-1})T^{-(j-1)} \mbox{ for } 1\leq j\leq g-2 ,\\
B_iB_{i+1}^{-1}&=&T^{i-1}(B_1B_{2}^{-1})T^{-(i-1)} \mbox{ for } 1\leq i\leq g-1
\end{eqnarray*}
are all in $\Gamma$. It follows now from Lemma~\ref{lem:G=Mod} that 
$\Gamma=\mod(\Sigma_g)$.
\end{proof}


\begin{figure}[h!]	
\centering
\begin{tikzpicture}[every node/.style={inner sep=0pt}, scale=0.9]
\begin{scope} [yshift=0cm, scale=0.4]
 \draw[very thick,rounded corners=10pt] (-7.8,-2) -- (7.8,-2)--(8.75,0)-- (7.8,2) -- (-7.8,2) --(-8.75, 0)-- cycle;
 \draw[very thick, xshift=1.3cm] (0,0) circle [radius=0.6cm];
 \draw[very thick, xshift=3.9cm] (0,0) circle [radius=0.6cm];
 \draw[very thick, xshift=6.5cm] (0,0) circle [radius=0.6cm];
 \draw[very thick, xshift=-1.3cm] (0,0) circle [radius=0.6cm];
 \draw[very thick, xshift=-3.9cm] (0,0) circle [radius=0.6cm];
 \draw[very thick, xshift=-6.5cm] (0,0) circle [radius=0.6cm];
 \draw[thick, red, xshift=1.3cm] (0,0) circle [radius=0.9cm];
 \draw[thick, red, rounded corners=4pt, xshift=-2.6cm] (-0.7,-0.03) -- (-0.4,-0.23)--(0.4,-0.23) --(0.7,-0.03);
 \draw[thick, red, dashed, rounded corners=4pt, xshift=-2.6cm] (-0.7,0.03) -- (-0.4,0.23)--(0.4,0.23) --(0.7,0.03);
\draw[thick, blue, rounded corners=4pt, xshift=-7.8cm] (-0.7,-0.03) -- (-0.4,-0.23)--(0.4,-0.23) --(0.7,-0.03);
 \draw[thick, blue, dashed, rounded corners=4pt, xshift=-7.8cm] (-0.7,0.03) -- (-0.4,0.23)--(0.4,0.23) --(0.7,0.03);
\draw[thick, blue, rounded corners=3pt] (-3.93,0.6) -- (-4.13, 1)--(-4.13,1.6) --(-3.93,2);
 \draw[thick, blue, dashed, rounded corners=3pt] (-3.87,0.6) -- (-3.67, 1)--(-3.67,1.6) --(-3.87,2);
\node[blue, scale=0.6] at (-7.8 ,-0.6) {$+$};
\node[blue, scale=0.6] at (-4.5,1.2) {$+$};
\node[red, scale=0.6] at (-2.6,-0.6) {$-$};
\node[red, scale=0.6] at (1.3,1.2) {$-$};
\end{scope}

\begin{scope} [yshift=-2cm, scale=0.4]
 \draw[very thick,rounded corners=10pt] (-7.8,-2) -- (7.8,-2)--(8.75,0)-- (7.8,2) -- (-7.8,2) --(-8.75, 0)-- cycle;
 \draw[very thick, xshift=1.3cm] (0,0) circle [radius=0.6cm];
 \draw[very thick, xshift=3.9cm] (0,0) circle [radius=0.6cm];
 \draw[very thick, xshift=6.5cm] (0,0) circle [radius=0.6cm];
 \draw[very thick, xshift=-1.3cm] (0,0) circle [radius=0.6cm];
 \draw[very thick, xshift=-3.9cm] (0,0) circle [radius=0.6cm];
 \draw[very thick, xshift=-6.5cm] (0,0) circle [radius=0.6cm];
 \draw[thick, red, xshift=3.9cm] (0,0) circle [radius=0.9cm];
 \draw[thick, red, rounded corners=4pt, xshift=0cm] (-0.7,-0.03) -- (-0.4,-0.23)--(0.4,-0.23) --(0.7,-0.03);
 \draw[thick, red, dashed, rounded corners=4pt, xshift=0cm] (-0.7,0.03) -- (-0.4,0.23)--(0.4,0.23) --(0.7,0.03);
\draw[thick, blue, rounded corners=4pt, xshift=-5.2cm] (-0.7,-0.03) -- (-0.4,-0.23)--(0.4,-0.23) --(0.7,-0.03);
 \draw[thick, blue, dashed, rounded corners=4pt, xshift=-5.2cm] (-0.7,0.03) -- (-0.4,0.23)--(0.4,0.23) --(0.7,0.03);
\draw[thick, blue, rounded corners=10pt]              (-6,-0.3) -- (-5, -1.2)--(-2.8,-1.2) --(-1.8,-0.3);
\draw[thick, blue, dashed, rounded corners=2pt] (-5.9,-0.25) --(-5.7,-0.3)-- (-5, -0.8)--(-3.9,-1)--
(-2.8,-0.8) --(-2.1,-0.2)--(-1.88,-0.2);
\node[blue, scale=0.6] at (-3.8 ,-1.5) {$+$};
\node[blue, scale=0.6] at (-5.2,0.6) {$+$};
\node[red, scale=0.6] at (0,-0.6) {$-$};
\node[red, scale=0.6] at (3.9,1.2) {$-$};
\end{scope}

\begin{scope} [yshift=-4cm, scale=0.4]
 \draw[very thick,rounded corners=10pt] (-7.8,-2) -- (7.8,-2)--(8.75,0)-- (7.8,2) -- (-7.8,2) --(-8.75, 0)-- cycle;
 \draw[very thick, xshift=1.3cm] (0,0) circle [radius=0.6cm];
 \draw[very thick, xshift=3.9cm] (0,0) circle [radius=0.6cm];
 \draw[very thick, xshift=6.5cm] (0,0) circle [radius=0.6cm];
 \draw[very thick, xshift=-1.3cm] (0,0) circle [radius=0.6cm];
 \draw[very thick, xshift=-3.9cm] (0,0) circle [radius=0.6cm];
 \draw[very thick, xshift=-6.5cm] (0,0) circle [radius=0.6cm];
 \draw[thick, red, xshift=1.3cm] (0,0) circle [radius=0.9cm];
 \draw[thick, red, xshift=3.9cm] (0,0) circle [radius=0.9cm];
\draw[thick, blue, rounded corners=4pt, xshift=-5.2cm] (-0.7,-0.03) -- (-0.4,-0.23)--(0.4,-0.23) --(0.7,-0.03);
 \draw[thick, blue, dashed, rounded corners=4pt, xshift=-5.2cm] (-0.7,0.03) -- (-0.4,0.23)--(0.4,0.23) --(0.7,0.03);
\draw[thick, blue, rounded corners=10pt](-6,-0.3) -- (-5, -1.2)--(-2.8,-1.2) --(-1.8,-0.3);
\draw[thick, blue, dashed, rounded corners=2pt] (-5.9,-0.25) --(-5.7,-0.3)-- (-5, -0.8)--(-3.9,-1)--
(-2.8,-0.8) --(-2.1,-0.2)--(-1.88,-0.2);
\node[blue, scale=0.6] at (-3.8 ,-1.5) {$+$};
\node[blue, scale=0.6] at (-5.2,0.6) {$+$};
\node[red, scale=0.6] at (3.9,1.2) {$-$};
\node[red, scale=0.6] at (1.3,1.2) {$-$};
\end{scope}

\begin{scope} [yshift=-6cm, scale=0.4]
 \draw[very thick,rounded corners=10pt] (-7.8,-2) -- (7.8,-2)--(8.75,0)-- (7.8,2) -- (-7.8,2) --(-8.75, 0)-- cycle;
 \draw[very thick, xshift=1.3cm] (0,0) circle [radius=0.6cm];
 \draw[very thick, xshift=3.9cm] (0,0) circle [radius=0.6cm];
 \draw[very thick, xshift=6.5cm] (0,0) circle [radius=0.6cm];
 \draw[very thick, xshift=-1.3cm] (0,0) circle [radius=0.6cm];
 \draw[very thick, xshift=-3.9cm] (0,0) circle [radius=0.6cm];
 \draw[very thick, xshift=-6.5cm] (0,0) circle [radius=0.6cm];
 \draw[thick, blue, xshift=-1.3cm] (0,0) circle [radius=0.9cm];
 \draw[thick, blue, xshift=1.3cm] (0,0) circle [radius=0.9cm];
\draw[thick, red, rounded corners=4pt, xshift=-7.8cm] (-0.7,-0.03) -- (-0.4,-0.23)--(0.4,-0.23) --(0.7,-0.03);
 \draw[thick, red, dashed, rounded corners=4pt, xshift=-7.8cm] (-0.7,0.03) -- (-0.4,0.23)--(0.4,0.23) --(0.7,0.03);
\draw[thick, red, rounded corners=3pt] (-3.93,0.6) -- (-4.13, 1)--(-4.13,1.6) --(-3.93,2);
 \draw[thick, red, dashed, rounded corners=3pt] (-3.87,0.6) -- (-3.67, 1)--(-3.67,1.6) --(-3.87,2);
\node[red, scale=0.6] at (-7.8 ,-0.6) {$-$};
\node[red, scale=0.6] at (-4.5,1.2) {$-$};
\node[blue, scale=0.6] at (-1.3,1.2) {$+$};
\node[blue, scale=0.6] at (1.3,1.2) {$+$};
\end{scope}

\begin{scope} [xshift=7.2cm, yshift=0cm, scale=0.4]
 \draw[very thick,rounded corners=10pt] (-7.8,-2) -- (7.8,-2)--(8.75,0)-- (7.8,2) -- (-7.8,2) --(-8.75, 0)-- cycle;
 \draw[very thick, xshift=1.3cm] (0,0) circle [radius=0.6cm];
 \draw[very thick, xshift=3.9cm] (0,0) circle [radius=0.6cm];
 \draw[very thick, xshift=6.5cm] (0,0) circle [radius=0.6cm];
 \draw[very thick, xshift=-1.3cm] (0,0) circle [radius=0.6cm];
 \draw[very thick, xshift=-3.9cm] (0,0) circle [radius=0.6cm];
 \draw[very thick, xshift=-6.5cm] (0,0) circle [radius=0.6cm];
 \draw[thick, blue, xshift=-1.3cm] (0,0) circle [radius=0.9cm];
 \draw[thick, red, xshift=1.3cm] (0,0) circle [radius=0.9cm];
 \draw[thick, red, xshift=3.9cm] (0,0) circle [radius=0.9cm];
\draw[thick, blue, rounded corners=4pt, xshift=-5.2cm] (-0.7,-0.03) -- (-0.4,-0.23)--(0.4,-0.23) --(0.7,-0.03);
 \draw[thick, blue, dashed, rounded corners=4pt, xshift=-5.2cm] (-0.7,0.03) -- (-0.4,0.23)--(0.4,0.23) --(0.7,0.03);
\node[blue, scale=0.6] at (-5.2,-0.6) {$+$};
\node[blue, scale=0.6] at (-1.3 ,1.2) {$+$};
\node[red, scale=0.6] at (3.9,1.2) {$-$};
\node[red, scale=0.6] at (1.3,1.2) {$-$};
\end{scope}

\begin{scope} [xshift=7.2cm, yshift=-2cm, scale=0.4]
 \draw[very thick,rounded corners=10pt] (-7.8,-2) -- (7.8,-2)--(8.75,0)-- (7.8,2) -- (-7.8,2) --(-8.75, 0)-- cycle;
 \draw[very thick, xshift=1.3cm] (0,0) circle [radius=0.6cm];
 \draw[very thick, xshift=3.9cm] (0,0) circle [radius=0.6cm];
 \draw[very thick, xshift=6.5cm] (0,0) circle [radius=0.6cm];
 \draw[very thick, xshift=-1.3cm] (0,0) circle [radius=0.6cm];
 \draw[very thick, xshift=-3.9cm] (0,0) circle [radius=0.6cm];
 \draw[very thick, xshift=-6.5cm] (0,0) circle [radius=0.6cm];
 \draw[thick, red, xshift=-1.3cm] (0,0) circle [radius=0.9cm];
 \draw[thick, red, xshift=1.3cm] (0,0) circle [radius=0.9cm];
 \draw[thick, blue, xshift=-3.9cm] (0,0) circle [radius=0.9cm];
 \draw[thick, blue, xshift=-6.5cm] (0,0) circle [radius=0.9cm];
\node[blue, scale=0.6] at (-6.5 ,1.2) {$+$};
\node[blue, scale=0.6] at (-3.9,1.2) {$+$};
\node[red, scale=0.6] at (-1.3,1.2) {$-$};
\node[red, scale=0.6] at (1.3,1.2) {$-$};
\end{scope}

\begin{scope} [xshift=7.2cm,yshift=-6cm, scale=0.4]
 \draw[very thick,rounded corners=10pt] (-7.8,-2) -- (7.8,-2)--(8.75,0)-- (7.8,2) -- (-7.8,2) --(-8.75, 0)-- cycle;
 \draw[very thick, xshift=1.3cm] (0,0) circle [radius=0.6cm];
 \draw[very thick, xshift=3.9cm] (0,0) circle [radius=0.6cm];
 \draw[very thick, xshift=6.5cm] (0,0) circle [radius=0.6cm];
 \draw[very thick, xshift=-1.3cm] (0,0) circle [radius=0.6cm];
 \draw[very thick, xshift=-3.9cm] (0,0) circle [radius=0.6cm];
 \draw[very thick, xshift=-6.5cm] (0,0) circle [radius=0.6cm];
 \draw[thick, blue, xshift=6.5cm] (0,0) circle [radius=0.9cm];
 \draw[thick, blue, xshift=3.9cm] (0,0) circle [radius=0.9cm];
 \draw[thick, red, xshift=-6.5cm] (0,0) circle [radius=0.9cm];
 \draw[thick, red, rounded corners=9pt] (-7.3,-1.6) -- (7.3,-1.6)--(8.2,0)-- (7.3,1.6) -- (-7.3,1.6) --(-8.2, 0)-- cycle;
\node[red, scale=0.6] at (-6.5 ,1.2) {$-$};
\node[red, scale=0.6] at (0,1.2) {$-$};
\node[blue, scale=0.6] at (3.9,1.2) {$+$};
\node[blue, scale=0.6] at (6.5,1.2) {$+$};

\end{scope}

\begin{scope} [xshift=7.2cm, yshift=-4cm, scale=0.4]
 \draw[very thick,rounded corners=10pt] (-7.8,-2) -- (7.8,-2)--(8.75,0)-- (7.8,2) -- (-7.8,2) --(-8.75, 0)-- cycle;
 \draw[very thick, xshift=1.3cm] (0,0) circle [radius=0.6cm];
 \draw[very thick, xshift=3.9cm] (0,0) circle [radius=0.6cm];
 \draw[very thick, xshift=6.5cm] (0,0) circle [radius=0.6cm];
 \draw[very thick, xshift=-1.3cm] (0,0) circle [radius=0.6cm];
 \draw[very thick, xshift=-3.9cm] (0,0) circle [radius=0.6cm];
 \draw[very thick, xshift=-6.5cm] (0,0) circle [radius=0.6cm];
  \draw[thick, blue, xshift=6.5cm] (0,0) circle [radius=0.9cm];
 \draw[thick, red, xshift=-6.5cm] (0,0) circle [radius=0.9cm];
 \draw[thick, red, rounded corners=9pt] (-7.3,-1.6) -- (7.3,-1.6)--(8.2,0)-- (7.3,1.6) -- (-7.3,1.6) --(-8.2, 0)-- cycle;
 
\draw[thick, blue, rounded corners=4pt, xshift=2.6cm] (-0.7,-0.03) -- (-0.4,-0.23)--(0.4,-0.23) --(0.7,-0.03);
 \draw[thick, blue, dashed, rounded corners=4pt, xshift=2.6cm] (-0.7,0.03) -- (-0.4,0.23)--(0.4,0.23) --(0.7,0.03);

\node[red, scale=0.6] at (-6.5 ,1.2) {$-$};
\node[red, scale=0.6] at (0,1.2) {$-$};
\node[blue, scale=0.6] at (2.6,-0.6) {$+$};
\node[blue, scale=0.6] at (6.5,1.2) {$+$};

\end{scope}

\node[scale=0.7] at (-3.4, 0.8) {$G_1$};
\node[scale=0.7] at (-3.4, -1.2) {$G_2$};
\node[scale=0.7] at (-3.4, -3.2) {$G_3$};
\node[scale=0.7] at (-3.4, -5.2) {$G_4$};
\node[scale=0.7] at (3.8, 0.8) {$G_5$};
\node[scale=0.7] at (3.8, -1.2) {$G_6$};
\node[scale=0.7] at (3.8, -3.2) {$G_7$};
\node[scale=0.7] at (3.8, -5.2) {$G_8$};

\end{tikzpicture}
\caption{The Dehn twist curves of $G_1, \ldots, G_8$ in the proof of Theorem~\ref{prop:g=even}   drawn on $\Sigma_6$.}
\label{fig:even}
\end{figure}
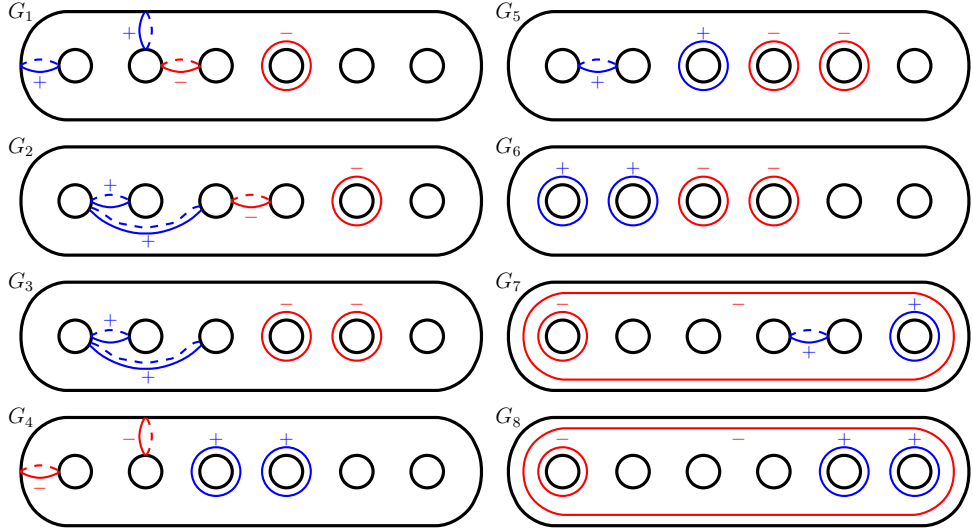

\begin{theorem} \label{prop:g=even}  
For $g\geq 6$, the mapping class group  $\mod(\Sigma_g)$ is generated by  $T$ and $A_1A_2C_2^{-1}B_4^{-1}.$
\end{theorem}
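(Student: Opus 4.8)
The plan is to follow the proof of Theorem~\ref{prop:g=odd} almost line for line, with the rotation $R$ replaced by the alternating rotation $T$ and the index bookkeeping adjusted to the different way $T$ permutes the standard curves. (The point of working with $T$ rather than with the simpler $R$ is that, for even $g$, $T$ is a single commutator by Proposition~\ref{torsioncomm}(4), whereas $R$ is only known to be one for odd $g$.) Set $G_1:=A_1A_2C_2^{-1}B_4^{-1}$ and let $\Gamma\leq\mod(\Sigma_g)$ be the subgroup generated by $T$ and $G_1$; the goal is $\Gamma=\mod(\Sigma_g)$. Recall from the proof of Theorem~\ref{prop:g=ufak} that $T$ acts by the two $(g+1)$--cycles $a_1\mapsto c_1\mapsto c_2\mapsto\cdots\mapsto c_{g-1}\mapsto a_g\mapsto a_1$ and $b_1\mapsto b_2\mapsto\cdots\mapsto b_{g+1}\mapsto b_1$. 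Thus conjugation by powers of $T$ still transports any ``difference of two Dehn twists'' element along these orbits; the only new feature is that the first orbit interleaves $a_1$ and $a_g$ among the $c_k$'s, so that a single $T$--orbit of an element like $B_1C_1^{-1}$ already sweeps out all of $B_iC_i^{-1}$ for $1\leq i\leq g-1$ together with $B_gA_g^{-1}$ and $B_{g+1}A_1^{-1}$ at no extra cost.

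Concretely, I would run the same chain of conjugates as in the proof of Theorem~\ref{prop:g=odd}, now recorded on $\Sigma_6$ in Figure~\ref{fig:even}. First put $G_2:=TG_1T^{-1}$, and then a conjugate of the same shape as $F_3$, namely $G_3:=(G_2G_1)G_2(G_2G_1)^{-1}$, where the element $G_2G_1\in\Gamma$ is chosen so that it moves the curves supporting $G_2$ into a configuration differing from $G_2$ by exactly one twist; comparing $G_2$ with $G_3$ then isolates a product of two twists, which one spreads around the appropriate $T$--orbit. Iterating with a handful of further carefully chosen conjugates $G_4,\ldots,G_8$ — each staying in $\Gamma$ because conjugating an element of $\Gamma$ by a product of Dehn twists already in $\Gamma$ lands back in $\Gamma$, and each conjugate being evaluated via the conjugation, commutativity and braid relations — one peels off in turn elements of the forms $A_iB_i^{-1}$, $A_iC_{i-1}^{-1}$, $B_iC_i^{-1}$, and finally $B_iB_{i+1}^{-1}=(B_iC_i^{-1})(C_iB_{i+1}^{-1})$ and $C_jC_{j+1}^{-1}=(C_jB_{j+1}^{-1})(B_{j+1}C_{j+1}^{-1})$ for all admissible $i,j$. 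At that point $\Gamma$ contains $A_1B_1^{-1}$, $A_2B_2^{-1}$, $B_1C_1^{-1}$ and all the $B_iB_{i+1}^{-1}$ and $C_jC_{j+1}^{-1}$, so Lemma~\ref{lem:G=Mod} gives $\Gamma=\mod(\Sigma_g)$.

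The step I expect to be the real obstacle — and the only place the argument is not a mechanical transcription of the odd case — is producing the elements attached to the curves $a_2,\ldots,a_{g-1}$, in particular $A_2B_2^{-1}$, since these curves do not lie on any of the $T$--orbits exploited above. For those one must conjugate $G_1$ itself (rather than transport along $T$) by suitable products of twists already shown to be in $\Gamma$, much as the elements $D_1,D_2$ were produced via the auxiliary diffeomorphisms $F$ and $G$ inside the proof of Lemma~\ref{lem:G=Mod}. Beyond that, the difficulty is purely combinatorial: choosing the auxiliary products $G_2G_1,\ G_4G_3,\dots$ so that their action on the relevant curves is under control, and checking curve by curve — by inspection of Figure~\ref{fig:even}, or by Alexander's method applied to a maximal chain — that each $G_k$ is the asserted product of Dehn twists. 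One also verifies that $g\geq6$ leaves enough room for every curve that appears (the $T$--translates of $a_1,a_2,c_2,b_4$, and the lantern curves implicit in Lemma~\ref{lem:G=Mod}); genus $5$, being odd, is in any case covered by Theorem~\ref{prop:g=odd}, so assuming $g\geq6$ costs nothing.
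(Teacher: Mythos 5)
Your high-level strategy coincides with the paper's: mimic the proof of Theorem~\ref{prop:g=odd} with $T$ in place of $R$ and reduce to Lemma~\ref{lem:G=Mod}. But as written there is a genuine gap, and you have in fact located it yourself: you defer the production of $A_2B_2^{-1}$ (and of anything involving the curves $a_2,\dots,a_{g-1}$, which lie on no $T$--orbit) to an unspecified conjugation of $G_1$ ``by suitable products of twists already shown to be in $\Gamma$,'' in the style of the lantern argument inside Lemma~\ref{lem:G=Mod}. That step is not a mechanical transcription of the odd case and is never carried out; since Lemma~\ref{lem:G=Mod} requires $A_2B_2^{-1}$, the argument is incomplete exactly there. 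The paper's resolution is different and is the one genuinely new idea of this proof: it works with the auxiliary curve $d=T(a_2)$, which is \emph{not} a standard curve, so that already $G_2:=TG_1T^{-1}=C_1DC_3^{-1}B_5^{-1}$ involves $d$; the chain of conjugates then produces $DB_3^{-1}\in\Gamma$, and conjugating \emph{back} gives $T^{-1}(DB_3^{-1})T=A_2B_2^{-1}$. This is also precisely where the choice of $T$ over $S$ earns its keep: $d=T(a_2)$ is disjoint from $a_2$ (unlike $S(a_2)$), which is what makes $G_3=(G_2G_1)G_2(G_2G_1)^{-1}$ and its successors come out as clean products of commuting twists. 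Your proposal never confronts the appearance of $d$, even though it enters at the very first conjugation $TG_1T^{-1}$, not only at the $A_2B_2^{-1}$ step.

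Two smaller points. First, you assert that the iteration yields $A_iB_i^{-1}$ and $A_iC_{i-1}^{-1}$ for all $i$; for even $g$ this is not what happens (and cannot happen by $T$--conjugation alone, for the reason you yourself give) --- the paper only obtains $A_1B_1^{-1}$ and $A_2B_2^{-1}$, which is all that Lemma~\ref{lem:G=Mod} needs. Second, the substance of such a proof lies in exhibiting the specific conjugates $G_2,\dots,G_8$ and verifying, curve by curve, that each equals the asserted product of twists; announcing that ``a handful of further carefully chosen conjugates'' will do the job does not discharge that burden.
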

\begin{proof}
Let $\Gamma$ be the subgroup of $\MCG$ generated by $T$ and $G_1:=A_1A_2C_2^{-1}B_4^{-1}$. 
Let $d=T(a_2)$.\footnote{To view the action of $T$ (specifically on the curve $a_2$) in the ``standard'' model as in Figure~\ref{fig:even}, one can observe that $S=(A_1B_1C_1B_2C_2\cdots B_{g-1}C_{g-1}B_gA_g)^2$.}  
We then have
\[
G_2:=TG_1T^{-1}=C_1DC_3^{-1}B_5^{-1} \in \Gamma.
\]
Since $G_2G_1$ maps the curve $c_3$ to $b_4$ and fixes 
 $c_1,d,b_5$, the conjugation of $G_2$ with $G_2G_1$ gives\footnote{Here we use the fact that $a_2$ is disjoint from 
$d=T(a_2)$, which would \emph{not} be the case for $S(a_2)$. This is essentially the reason why we preferred to work with the slightly more complicated torsion element $T$ rather than $S$.}
\begin{eqnarray*}
G_3 & :=& (G_2G_1) G_2(G_2G_1)^{-1}\\
        & = & C_1 D B_4^{-1} B_5^{-1} \in \Gamma.
\end{eqnarray*}

Hence, the subgroup $\Gamma$ contains the elements $G_2G_3^{-1}=C_3^{-1}B_4$ and 
$G_2^{-1}G_3=C_3B_4^{-1}$. Thus, by conjugating by powers of $T$, we see that
\begin{equation}
A_1B_1^{-1}\in \Gamma  \mbox{ and } C_{i}B_{i+1}^{-1}\in \Gamma \label{G=a1b1}
\end{equation}
for $1\leq i \leq g-1$.

We also have
\[
G_4:=G_1^{-1}(C_2^{-1}B_3)=A_1^{-1}A_2^{-1} B_3B_4  \in \Gamma
\]
and
\[
G_5:=(G_3G_4)G_3(G_3 G_4)^{-1}=C_1B_3B_4^{-1}B_5^{-1} \in \Gamma
\]
 by a similar calculation to that of $G_3$ above. From this we get
 \[
 G_3G_5^{-1}=DB_3^{-1} \in \Gamma
 \] 
and hence
\begin{equation}
T^{-1}(DB_3^{-1})T=A_2B_2^{-1}\in \Gamma. \label{G=a2b2}
\end{equation}

Let
\[
G_6:=T^{-1} (B_2C_1^{-1})G_5T= B_1B_2B_3^{-1}B_4^{-1} \in \Gamma
\]
and
\[
G_7:=(T^3 G_5T^{-3})=C_4B_6B_7^{-1}B_8^{-1} \in \Gamma.
\]
Here, we take $B_8=B_1$ if $g=6$. It is then easy to see that
\[
G_8:=(G_7G_6^{-1})G_7(G_7G_6^{-1})^{-1}= B_4B_6B_7^{-1}B_8^{-1} \in \Gamma.
\]
We then have
\[
G_8G_7^{-1}= B_4C_4^{-1} \in \Gamma. 
\]  

 \noindent Thus, by the action of $T$,  we get 
 \begin{equation}
 B_iC_i^{-1}\in \Gamma \label{G=bici}
 \end{equation}
  for all $i$.
As in the proof of Theorem~\ref{prop:g=odd} , we obtain that 
  \begin{equation}
  B_iB_{i+1}^{-1}, C_iC_{i+1}^{-1}\in \Gamma. \label{G=blercler}
  \end{equation}

 Once again from~\eqref{G=a1b1}--\eqref{G=blercler} and  Lemma~\ref{lem:G=Mod}, 
 we conclude  $\Gamma=\mod(\Sigma_g)$.
 \end{proof}


\bigskip
\section{Proof of the main theorem}\label{sec:final}

The proof of our main theorem now follows easily from the array of results we have obtained thus far.

\begin{proof}[Proof of Theorem~\ref{thm:1}] We will obtain our sharpest results in four cases:

\smallskip
\noindent \underline{$g\geq 5$ is odd:} By Theorem~\ref{prop:g=odd}, the mapping class group $\mod(\Sigma_g)$ is generated by $R$ and $A_1A_2C_2^{-1}B_4^{-1}.$ By Proposition~\ref{torsioncomm}, $R$ is a single commutator. On the other hand, there is clearly a diffeomorphism $\phi$ of $\Sigma_g$ mapping the pair
$(a_1,a_2)$ to $(b_4, c_2)$ so that   
\begin{eqnarray*}
A_1A_2C_2^{-1}B_4^{-1}
&=& A_1A_2 (B_4 C_2)^{-1} \\
&=& A_1A_2 (\phi A_1 A_2 \phi^{-1})^{-1} \\
&=& A_1A_2 \phi (A_1 A_2)^{-1} \phi^{-1} \\
&=& [A_1A_2, \phi].
\end{eqnarray*}

\smallskip
\noindent \underline{$g\geq 6$ is even:} By Theorem~\ref{prop:g=even},  the mapping class group $\mod(\Sigma_g)$ is generated by  $T$ and $A_1A_2C_2^{-1}B_4^{-1}$. Again $T$ is a commutator by Proposition~\ref{torsioncomm}, and $A_1A_2C_2^{-1}B_4^{-1}=[A_1 A_2, \phi]$ as above.

\smallskip
\noindent \underline{$g=3$:} By similar arguments we had in Section~\ref{sec:gen}, one can show that when $g \geq 3$, $\mod(\Sigma_g)$ is generated by the elements $R, A_1A_2^{-1},$ and $A_1B_1C_1C_2^{-1}A_3^{-1}B_3^{-1}$; see  Corollary 6 in~\cite{KorkmazInvolutions}. Once again $R$ is a commutator by Proposition~\ref{torsioncomm}. Clearly there is a diffeomorphism $\psi$ of $\Sigma_3$ mapping $a_1$ to $a_2$ and a diffeomorphism $\varphi$ mapping $(a_1, b_1, c_1)$ to $(c_3, b_3, c_2)$. It follows that $ A_1A_2^{-1} = [A_1, \psi]$ and $A_1B_1C_1C_2^{-1}B_3^{-1}C_3^{-1} = [A_1B_1C_1, \varphi]$.

\smallskip
\noindent \underline{$g=4$:}  In this case, by Proposition~\ref{prop:g=ufak},  $\mod(\Sigma_4)$ is generated by the three elements $T, A_1A_2^{-1}$ and $A_1B_1C_1C_2^{-1}B_3^{-1}C_3^{-1}$.
Once again $T$ is a commutator by Proposition~\ref{torsioncomm} and so are the other two elements, as we have argued above.

This completes the proof of Theorem~\ref{thm:1}.
\end{proof}

\bigskip


\begin{thebibliography}{xxxx}



\bibitem{BrendleFarb} T.E. Brendle, B. Farb,
\emph{Every mapping class group is generated by 6 involutions}.
J. of Algebra \textbf{278} (2004), 187--198.

\bibitem{Carmichael} R. D. Carmichael, \emph{Introduction to the theory of groups of finite order}, Dover Publications, New York, 1956. 

\bibitem{Dehn} M. Dehn,
\emph{The group of mapping classes}.  In: Papers on Group Theory and Topology.
Springer-Verlag, 1987. Translated from the German by J. Stillwell 
(Die Gruppe der Abbildungsklassen, Acta Math.  \textbf{69} (1938), 135--206).

\bibitem{EndoKotschick} H. Endo and D. Kotschick, \emph{Bounded cohomology and non-uniform perfection of mapping class groups}. 
Invent. Math. 144 (2001), no. 1, 169–-175. 

\bibitem{FarbMargalit} B. Farb, D. Margalit,
\emph{A primer on mapping class groups}.
Princeton University Press, 2011.

\bibitem{Humphries} S. Humphries,
\emph{Generators for the mapping class group}. 
In: Topology of Low-Dimensional Manifolds, Proc. Second Sussex Conf., Chelwood Gate, 1977, 
Lecture Notes in Math., vol. \textbf{722}, Springer-Verlag, 1979, 44--47.


\bibitem{Korkmaz} M. Korkmaz,
\emph{Generating the surface mapping class group by two elements}.
 Trans. Amer. Math. Soc \textbf{357} (2005), 3299--3310.

\bibitem{KorkmazInvolutions} M. Korkmaz,
\emph{Mapping class group is generated by three involutions.} https://arxiv.org/abs/1904.08156. 

\bibitem{Lickorish} W.B.R. Lickorish,
\emph{A finite set of generators for the homeotopy group of a $2$--manifold}.
 Proc. Cambridge Philos. Soc. \textbf{60} (1964), 769--778.

\bibitem{LiebeckEtAl} M. W. Liebeck, E. A. O'Brien, A. Shalev, P. H. Tiep, \emph{The Ore conjecture.}  J. Eur. Math. Soc. (JEMS) 12 (2010), no. 4, 939-–1008. 

\bibitem{Powell} J. Powell, \emph{Two theorems on the mapping class group of a surface}.
Proc. Amer. Math. Soc. 68 (1978), no. 3, 347–-350. 

\bibitem{Thompson} R. C. Thompson, \emph{Commutators in the special and general linear groups.} Trans. Amer. Math. Soc. 101 (1961), 16–-33.


\bibitem{Wajnryb1996} B. Wajnryb,
\emph{Mapping class group of a surface is generated by two elements}.
Topology  \textbf{35} (1996), 377--383.


\end{thebibliography}
\end{document}